\tikzstyle{point} = [rectangle, rounded corners, minimum width=2cm, minimum height=1cm, draw=black]
\tikzstyle{arrow} = [thick,->,>=stealth]
\newcommand{\R}{\mathbb{R}}
\newcommand{\Z}{\mathbb{Z}}
\newcommand{\Q}{\mathbb{Q}}
\newcommand{\F}{\mathcal{F}}
\newcommand{\IP}{\mathrm{IP}}
\newcommand{\LRp}{\mathrm{LR+}}
\newcommand{\LDp}{\mathrm{LD+}}
\newcommand{\NLP}{\mathrm{NLP}}
\newcommand{\st}{\mathrm{s.t.}}
\newcommand{\conv}{\mathrm{conv}}
\newcommand{\diam}{\mathrm{diam}}
\newcommand{\cone}{\mathrm{cone}}
\newcommand{\icone}{\mathrm{int.cone}}
\newcommand{\TheTitle}{Exact Augmented Lagrangian Duality for Mixed Integer Quadratic Programming}
\newcommand{\ShortTitle}{Exact Augmented Lagrangian Duality for MIQP}
\newcommand{\TheAuthors}{X. Gu, S. Ahmed, and S.S. Dey}
\headers{\ShortTitle}{\TheAuthors}
\title{{\TheTitle}
}
\author{
  Xiaoyi Gu\thanks{H. Milton Stewart School of Industrial and Systems Engineering, Georgia Institute of Technology, Atlanta, GA, USA (\email{xiaoyigu@gatech.edu}).}
  \and
  Shabbir Ahmed\thanks{H. Milton Stewart School of Industrial and Systems Engineering, Georgia Institute of Technology, Atlanta, GA, USA (\email{sahmed@isye.gatech.edu}).}
  \and
  Santanu S. Dey\thanks{H. Milton Stewart School of Industrial and Systems Engineering, Georgia Institute of Technology, Atlanta, GA, USA (\email{santanu.dey@isye.gatech.edu}).}
}
\begin{document}

\maketitle

\begin{abstract}
Mixed integer quadratic programming (MIQP) is the problem of minimizing a convex quadratic function over mixed integer points in a rational polyhedron. 
This paper focuses on the augmented Lagrangian dual (ALD) for MIQP. ALD augments the usual Lagrangian dual with a weighted nonlinear penalty on the dualized constraints. We first prove that ALD will reach a zero duality gap asymptotically as the weight on the penalty goes to infinity under some mild conditions on the penalty function. We next show that a finite penalty weight is enough for a zero gap when we use any norm as the penalty function. Finally, we prove a polynomially bound on the weight on the penalty term to obtain a zero gap. 
 \end{abstract}
%


\section{Introduction}
\label{sec:intro}
We consider the following rational (mixed) integer quadratic programming (MIQP) problem with decision variable $x\in\R^n$:
\begin{equation}\label{eq:mainproblem}
z^{\IP}:=\inf\{c^\top x+\frac{1}{2}x^\top Q x:Ax=b,x\in X\},  
\end{equation}
where the parameters are: a rational symmetric positive semi-definite matrix $Q\in\Q^{n\times n}$, a rational matrix $A\in\Q^{m\times n}$, rational vectors $c\in \Q^n$ and $b\in \Q^m$, a mixed integer linear set $X$ such that
\[X=\{(x_1,x_2)\in\R^{n_1}\times \Z^{n_2}:Ex\leq f\},\]
where $E\in\Q^{m_2\times n}$ is a rational matrix and $f\in\Q^{m_2}$ is a rational vector with $n_1+n_2=n$. 
%
We consider dualizing the constraints $Ax= b$.

While for continuous quadratic programming (QP), it is well known that even the classical Lagrangian dual (LD) will reach a zero duality gap and strong duality holds \cite{bertsekas2003convex}, it is not true for MIQP, as the integer variables introduce non-convexity. In fact, LD may have a non-zero duality gap for the problem. 
Therefore, to close the gap, the idea of penalizing violation of the dualized constraints with a nonlinear penalty gives rise to the well known augmented Lagrangian dual (ALD), which is
\[z_\rho^{\LDp}:=\sup_\lambda\inf_{x\in X}\{c^\top x+\frac{1}{2}x^\top Q x+\lambda^\top(b-Ax)+\rho\psi(b-Ax)\},\]
where $\rho>0$ is the penalty weight, and $\psi(\cdot)$ is the penalty function which usually satisfies $\psi(0)=0$ and $\psi(u)>0$ if $u\neq 0$ \cite{rockafellar2009variational}.

Numerous papers have discussed ALD. 
The paper \cite{rockafellar1974augmented} uses convex quadratic penalty functions for nonconvex programming, \cite{feizollahi2017exact} discusses the asymptotic zero duality gap and exact penalty representation for mixed integer \emph{linear} programming (MILP), \cite{burachik2017existence} discusses the optimality conditions for semi-infinite programming, and \cite{burachik2010duality} discusses exact penalization for general augmented Lagrangian.

It should be noted that an exact penalty representation usually requires a much restricted penalty function, like norm functions, see for example \cite{rockafellar2009variational}. 
Norm function is used in \cite{burke1991exact} for exact penalization. The work \cite{huang2003unified} discusses exact penalty representation using level-bounded augmented functions and \cite{rubinov2002zero} considers the penalty function which is almost peak at zero. More recent works like \cite{feizollahi2017exact,burachik2017existence} apply sharp Lagrangian to different types of problems.

On the other hand, the size (for example, in binary coding) of the penalty weight is rarely discussed. While there are discussions for the size and computational complexity of MILP \cite{von1978bound,borosh1976bounds}, QP \cite{vavasis1990quadratic} and MIQP \cite{del2017mixed}, we might be able to utilize their ideas to show the small size of the penalty weight.

In this paper, we significantly generalize the results of \cite{feizollahi2017exact}. In particular, we
\begin{enumerate}
\item Prove that the duality gap of ALD will asymptotically reach zero under mild conditions as the penalty weight goes to infinity;
\item Prove that the duality gap will reach zero given that the penalty function is any norm, and the penalty weight is sufficiently large but still finite;
\item Prove that the size of the penalty weight which attains zero duality gap is polynomially bounded with respect to the problem data. 
\end{enumerate}

The paper is organized as follows. In \Cref{sec:mainresults} we provide definitions and formal statement of main results of the paper. In \Cref{sec:preliminary} we present several key lemmas useful across the paper. In \Cref{sec:zerogap} we exhibit properties of ALD as the penalty weight goes to infinity, and show the (asymptotic) zero duality gap for a large class of penalty functions. In \Cref{sec:exactpenalty} we show a finite penalty weight whose size is polynomially bounded with respect to the input parameters, for which a zero duality gap is attained.
\section{Main Results}
\label{sec:mainresults}
In this section, we introduce some definitions and briefly present our main results.

\begin{assumption}\label{ass:problem}
The MIQP \cref{eq:mainproblem} is feasible and the optimal value is bounded.
\end{assumption}

\begin{definition} The augmented Lagrangian relaxation is defined as
\[z_\rho^{\LRp}(\lambda):=\inf_{x\in X}\{c^\top x+\frac{1}{2}x^\top Q x+\lambda^\top(b-Ax)+\rho\psi(b-Ax)\},\]
where $\psi$ is a penalty function. The augmented Lagrangian dual is defined as
\[z_\rho^{\LDp}:=\sup_\lambda z_\rho^{\LRp}(\lambda)=\sup_\lambda\inf_{x\in X}\{c^\top x+\frac{1}{2}x^\top Q x+\lambda^\top(b-Ax)+\rho\psi(b-Ax)\}.\]
\end{definition}

\begin{definition} The continuous relaxation of \cref{eq:mainproblem} is denoted as $z^{\NLP}$
\[z^\NLP:=\inf\{c^\top x+\frac{1}{2}x^\top Q x: Ax=b,Ex\leq f,x\in\R^{n_1+n_2}\}.\]
\end{definition}

\begin{remark}
We use $\bar{\lambda}$ to denote the optimal dual variables (of $z^\NLP$) for the constraints $Ax=b$ and $\bar{\lambda}_E$ to denote the optimal dual variables for $Ex\leq f$. The existence of $\bar{\lambda}$ and $\bar{\lambda}_E$ is guaranteed by the boundedness of the continuous relaxation, which is given by \cref{lm:NLPQPIP}.
\end{remark}

\begin{remark}
For any $\rho$, $\lambda$, we have $z_\rho^{\LRp}(\lambda)\leq z_\rho^{\LDp}\leq z^\IP$. Moreover, we have $z^\NLP=\inf\{c^\top x+\frac{1}{2}x^\top Q x+\bar{\lambda}^\top(b-Ax):Ex\leq f,x\in\R^{n_1+n_2}\}\leq z_\rho^{\LRp}(\bar{\lambda}) \leq z_\rho^{\LDp}\leq z^\IP$.
\end{remark}

\begin{definition}
For a finite set of vectors $T=\{t_1,t_2,...,t_k\}$, $\conv(T)$, $\cone(T)$ and $\icone(T)$ are the convex hull, conical hull and integral conical hull of $T$, respectively. Here, $\icone(T):=\{\sum_{i=1}^k\mu_it_i:\mu_i\in\Z_+\}$.
\end{definition}

\begin{definition}
For any subset $T$ of a metric space, its diameter $\rm{\diam}$ is defined as ${\rm{\diam}}(T)=\sup _{a,b\in T}\|a-b\|$, where $\|\cdot\|$ is the metric associated with the space. 
\end{definition}

\begin{definition} [\cite{del2017mixed}] Given an object $\mathcal{O}$ and another object $f(\mathcal{O})$ which is a function of it, we say that $f(\mathcal{O})$ has $\mathcal{O}$-small complexity, if the size (in standard binary encoding) of $f(\mathcal{O})$ is bounded above by a a polynomial function of the size of $\mathcal{O}$.
\end{definition}

\begin{definition} We use $\F$ to denote all input parameters of \cref{eq:mainproblem} including $E$, $f$, $c$, $Q$, $A$ and $b$. In addition, any object $q$ which is a function of $\F$ is said to have small complexity, if $q$ has $\F$-small complexity.
\end{definition}

Below we present the main theorems of the paper.

\begin{restatable}[Asymptotic Zero Duality Gap]{theorem}{AZDP}
\label{thm:main:zdg}
\label{thm:zdg}
Assume $\psi$ is proper, nonnegative, lower-semicontinuous and level-bounded, that is: $\psi(0)=0$; $\psi(u)>0$ for all $u\neq 0$; $\lim_{\delta\downarrow 0}{\rm{diam}}\{u:\psi(u)\leq \delta\}=0$; ${\rm{diam}}\{u:\psi(u)\leq \delta\}<\infty$ for all $\delta>0$. We have $\sup_{\rho>0}z^{\LDp}_\rho=z^{\IP}$.
\end{restatable}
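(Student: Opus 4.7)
The plan is to proceed by contradiction. First observe that, since $\psi\geq 0$, for every $\lambda$ the inner infimum $z^{\LRp}_\rho(\lambda)$ is non-decreasing in $\rho$, and hence so is $z^{\LDp}_\rho$; combined with the trivial upper bound $z^{\LDp}_\rho\leq z^{\IP}$, this shows $\sup_{\rho>0}z^{\LDp}_\rho=\lim_{\rho\to\infty}z^{\LDp}_\rho$. Suppose for contradiction that $z^\star:=\sup_{\rho>0}z^{\LDp}_\rho<z^{\IP}$. Fix any sequence $\rho_k\to\infty$; since $z^{\LRp}_{\rho_k}(\bar\lambda)\leq z^{\LDp}_{\rho_k}\leq z^\star$, I can pick $x_k\in X$ with
\[
L_k \;:=\; c^\top x_k + \frac{1}{2}x_k^\top Q x_k + \bar\lambda^\top(b-Ax_k) + \rho_k\psi(b-Ax_k) \;\leq\; z^\star + \frac{1}{k}.
\]

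The next step is to drive the constraint violation to zero. A standard KKT-duality argument on the convex continuous relaxation yields $c^\top x + \frac{1}{2}x^\top Q x + \bar\lambda^\top(b-Ax)\geq z^{\NLP}$ for every $x$ with $Ex\leq f$: one may add the nonpositive slack term $\bar\lambda_E^\top(Ex-f)$ and the resulting convex quadratic is globally minimized (over all of $\R^n$) by the NLP optimum via the KKT stationarity relation $c+Q\bar x -A^\top\bar\lambda + E^\top\bar\lambda_E=0$. Using this bound together with $\rho_k\psi\geq 0$ in the definition of $L_k$ gives
\[
\rho_k\psi(b-Ax_k) \;\leq\; L_k - z^{\NLP} \;\leq\; z^\star - z^{\NLP} + \frac{1}{k},
\]
so $\psi(b-Ax_k)\to 0$. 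The diameter hypothesis $\lim_{\delta\downarrow 0}\diam\{u:\psi(u)\leq\delta\}=0$, together with $\psi(0)=0$, then forces $b-Ax_k\to 0$: for any $\eta>0$, pick $\delta$ with diameter less than $\eta$; eventually $b-Ax_k$ lies in the level set $\{u:\psi(u)\leq\delta\}$ alongside $0$, whence $\|b-Ax_k\|\leq\eta$.

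Finally, I would combine these facts with a structural lemma for MIQP, to be established in \Cref{sec:preliminary}: for every sequence $\{x_k\}\subseteq X$ with $b-Ax_k\to 0$, one has $\liminf_k\bigl(c^\top x_k + \frac{1}{2}x_k^\top Q x_k\bigr)\geq z^{\IP}$. Dropping the nonnegative penalty and using $\bar\lambda^\top(b-Ax_k)\to 0$ in $L_k\leq z^\star + 1/k$ gives $\limsup_k\bigl(c^\top x_k + \frac{1}{2}x_k^\top Q x_k\bigr)\leq z^\star$, producing the contradiction $z^{\IP}\leq z^\star<z^{\IP}$. The principal obstacle is the structural lemma itself, since the approximating sequence $\{x_k\}$ need not be bounded: a proof will typically use a Meyer-type decomposition of $X$ into integer conic combinations of a finite generator set, pass to a subsequence with a stabilized vertex part, and then exploit positive semidefiniteness of $Q$ together with the vanishing violation $Ax_k\to b$ to rule out any recession contribution that would push the objective strictly below $z^{\IP}$.
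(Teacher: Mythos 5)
Your reduction is sound and is essentially the route the paper takes: the paper likewise evaluates the dual at the NLP multiplier $\bar{\lambda}$, uses $c^\top x+\frac{1}{2}x^\top Qx+\bar{\lambda}^\top(b-Ax)\geq z^{\NLP}$ for all $x$ with $Ex\leq f$ to force the penalty value of near-optimal points down to $O(1/\rho)$ (\cref{prop:penaltylimit}, \cref{lm:penaltyrelax}), and then converts ``small $\psi(b-Ax)$'' into ``small $\|b-Ax\|_\infty$'' via the level-boundedness assumption, exactly as you do. The difference is organizational: the paper phrases the endgame as an inequality between value functions and exchanges $\lim_{\rho\to\infty}$ with a minimum over the finitely many integer parts, whereas you phrase it as a $\liminf$ statement along sequences; the two are interchangeable once a uniform bound on the near-optimal $x$ is available.

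That uniform bound is precisely where the substance lies, and your proposal leaves it as a sketch. Your ``structural lemma'' is the combined content of \cref{lm:Qdecomp}, \cref{lm:boundedref} and \cref{lm:zdg:boundary}, and one sentence of your sketch points in a slightly wrong direction: the recession directions $r$ of the region $\{x\in X:\|b-Ax\|_\infty\leq\kappa\}$ all satisfy $Ar=0$, so the vanishing violation $Ax_k\to b$ cannot ``rule out any recession contribution'' --- those contributions are invisible to the violation. What actually controls them is (i) for rays with $Qr=0$, the boundedness of the MIQP (\cref{lm:NLPQPIP}) forces $c^\top r\geq 0$, so such rays can be discarded without lowering the objective, and (ii) for the residual cones one needs $x^\top Qx>0$ for \emph{every} nonzero $x$ in the cone, not merely for the extreme rays, which requires the refinement in part (c) of \cref{lm:Qdecomp} before \cref{lm:boundedref} can deliver a bounded sublevel set. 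Once the resulting $M$ is in hand, your subsequence argument (finitely many integer parts, a convergent continuous part, closedness of $X$ on each integer fiber) does close the contradiction. So: correct skeleton, same strategy as the paper, but the lemma you defer is the theorem's actual content and needs the decomposition machinery of \Cref{sec:preliminary} spelled out.
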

\begin{restatable}[Exact Penalty Representation]{theorem}{EPR}
\label{thm:main:flexeald}
\label{thm:flexeald}
Suppose $\psi(\cdot)$ is any norm. 
\begin{enumerate}[(a)]
\item There exists a $\rho^*$ of small complexity, such that $z^{\LDp}_{\rho^*}=z^{\IP}$.
\item Moreover, for all $\lambda$, there exists a finite $\rho^*(\lambda)$ of $\F,\lambda$-small complexity, such that $z^\LRp_{\rho^*}(\lambda)=z^{\IP}$.
\end{enumerate}
\end{restatable}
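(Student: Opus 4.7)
The plan is to establish part (b) first and deduce part (a) from it. For (a), apply (b) with $\lambda = \bar{\lambda}$, which has $\F$-small complexity by standard convex-QP sensitivity for the NLP relaxation. This produces $\rho^*$ of $\F$-small complexity with $z^{\LRp}_{\rho^*}(\bar{\lambda}) = z^{\IP}$, and the chain $z^{\LRp}_{\rho^*}(\bar{\lambda}) \leq z^{\LDp}_{\rho^*} \leq z^{\IP}$ forces $z^{\LDp}_{\rho^*} = z^{\IP}$.

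For (b), fix $\lambda$. Because $z^{\LRp}_\rho(\lambda) \leq z^{\IP}$ always, it suffices to produce $\rho^*$ of $\F,\lambda$-small complexity such that
\[
c^\top x + \tfrac{1}{2}x^\top Q x + \lambda^\top(b - Ax) + \rho^* \psi(b - Ax) \;\geq\; z^{\IP} \quad \text{for all } x \in X.
\]
When $Ax = b$ this is immediate: $x$ is IP-feasible and the penalty vanishes. For $Ax \neq b$, I would invoke the dual norm $\psi_*$ of $\psi$: the pairing inequality $\lambda^\top(b - Ax) \geq -\psi_*(\lambda)\,\psi(b - Ax)$ means that choosing $\rho^* = \psi_*(\lambda) + C$ reduces the task to finding a constant $C$ of $\F$-small complexity with
\[
c^\top x + \tfrac{1}{2}x^\top Q x + C\,\psi(b - Ax) \;\geq\; z^{\IP} \quad \text{for all } x \in X \text{ with } Ax \neq b. \qquad (\ast)
\]
Phrased via the MIQP value function $g(u) := \inf\{c^\top y + \tfrac{1}{2}y^\top Q y : y \in X,\ Ay = b - u\}$, $(\ast)$ is the polynomial stability estimate $g(u) \geq z^{\IP} - C\,\psi(u)$ on the domain of $g$.

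To establish $(\ast)$ I would take any $x \in X$ with $Ax = b - u$ and build a companion $x_0 \in X$ with $Ax_0 = b$ via the Minkowski-Weyl-Meyer decomposition $\conv(X) = \conv(V) + \cone(R)$, whose generators have $\F$-small complexity by the preliminary lemmas of \Cref{sec:preliminary} and the MIQP complexity bounds of \cite{del2017mixed}. The construction perturbs $x$ by a combination of generators in $R$ whose $A$-images cancel $u$ (adjusted to preserve integrality on the integer coordinates). Writing $d = x - x_0$, so that $Ad = -u$, and using $Q \succeq 0$ to discard $\tfrac{1}{2} d^\top Q d \geq 0$,
\[
c^\top x + \tfrac{1}{2}x^\top Q x \;\geq\; c^\top x_0 + \tfrac{1}{2}x_0^\top Q x_0 + d^\top(c + Q x_0) \;\geq\; z^{\IP} + d^\top(c + Q x_0),
\]
where IP-feasibility of $x_0$ gives the second inequality. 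Bounding $|d^\top(c + Q x_0)| \leq C\,\psi(u)$ then closes $(\ast)$; this follows from a polynomial proximity bound on $\|d\|$ in terms of $\psi(u)$ combined with a uniform polynomial bound on $\|x_0\|$, both in the size of $\F$.

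The principal obstacle is the joint proximity bound: producing $d$ with $\|d\|$ polynomially bounded by $\psi(u)$ while ensuring $\|x_0\|$ stays polynomially bounded uniformly in $x$. Integer recession generators with small $A$-images make the naive ratio $\|Ad\|/\|d\|$ small; I expect to resolve this by partitioning the generators of $\cone(R)$ into those lying in $\ker A$ (which can be absorbed into $x_0$ without disturbing $Ax_0 = b$) and those with nontrivial $A$-image, on which $\|Ad\|/\|d\|$ admits a polynomial lower bound derivable from the encoding size of $(A, R)$. Maintaining $x_0 \in X$ (integrality of $x_{0,2}$ and $Ex_0 \leq f$) throughout the split is the technically delicate step, and is where the polynomial-size guarantees from \cite{del2017mixed} should be combined with the quadratic proximity argument to produce the final $C$.
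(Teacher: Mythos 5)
There is a genuine gap, and it sits exactly where you flag it. Your outer reductions are fine and in fact mirror the paper's own final step (the paper converts from $\|\cdot\|_\infty$ to a general norm by norm equivalence and from $\bar{\lambda}$ to a general $\tilde{\lambda}$ by Cauchy--Schwarz, which is the same device as your dual-norm shift $\rho^*=\psi_*(\lambda)+C$). But after those reductions everything rests on the stability estimate $(\ast)$, i.e.\ $c^\top x+\tfrac12 x^\top Qx+C\,\psi(b-Ax)\geq z^{\IP}$ for all $x\in X$, and $(\ast)$ with $\lambda=0$ \emph{is} the theorem ($z^{\LRp}_{C}(0)=z^{\IP}$): restating it via the value function $g$ is not progress. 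Your proposed proof of $(\ast)$ has two unclosed holes. First, the mixed-integer proximity bound --- for every $x\in X$ with $Ax=b-u$ there is $x_0\in X$ with $Ax_0=b$ and $\|x-x_0\|\leq C'\psi(u)$, with $C'$ of small complexity --- is asserted, not proved; it is a Hoffman/Cook--Gerards--Schrijver--Tardos-type statement for mixed-integer sets whose proof (including preservation of integrality of $x_{0,2}$ and of $Ex_0\leq f$, and the small-complexity lower bound on $\|Ad\|/\|d\|$ over the non-kernel generators) is essentially the entire technical content you would need, and you explicitly defer it.

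Second, even granting the proximity bound, the step $|d^\top(c+Qx_0)|\leq C\,\psi(u)$ via ``a uniform polynomial bound on $\|x_0\|$'' is false as written: the feasible region is unbounded, and if $x$ recedes along a ray $r$ of $X\cap\{Ax=b-u\}$ then $x_0=x-d$ recedes with it, so no uniform bound on $\|x_0\|$ exists. What you actually need is a bound on $d^\top Qx_0$, which forces a case split on the recession generators: along rays with $Qr=0$ the vector $Qx_0$ stays bounded (so the argument can be salvaged there), while along rays with $r^\top Qr>0$ the objective grows quadratically and one must show this growth dominates, which brings you back to needing a uniform bound on the relevant sublevel sets --- precisely the role played in the paper by \cref{lm:Qdecomp}, \cref{lm:boundedref} and \cref{lm:eald:boundary}. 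The paper sidesteps the proximity route entirely: it bounds $x$ uniformly and with small complexity, enumerates the finitely many integer parts $x_2$, dualizes the remaining continuous QP in $(x_1,w)$ for each fixed $x_2$, uses \cref{thm:zdg} to show the dual values tend to at least $z^{\IP}$, and extracts a finite $\rho^*(x_2)$ of small complexity as the optimal value of a rational convex QCQP. To complete your route you would have to prove the mixed-integer proximity theorem with small-complexity constants and repair the $\|x_0\|$ step as above; neither is done in the proposal.
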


We provide a flowchart that depicts how the preliminary results proved in \Cref{sec:preliminary} are put together to prove \cref{thm:main:zdg}. 

\begin{tikzpicture}[node distance=2cm, text width=5.5cm]
\node (l12) [point] {\centering\cref{lm:NLPQPIP} \\ Equivalence of Boundedness of MIQP and its Continuous Relaxation};
\node (t17) [point, below of=l12, yshift = -0.3cm] {\centering\cref{prop:penaltylimit} \\ Approximation of the Penalty Term};
\node (t18) [point, below of=t17, yshift = 0.3cm] {\centering\cref{lm:penaltyrelax} \\ Equivalent Form of $z^\LRp_\rho(\bar{\lambda})$};
\node (l1314) [point, right of=l12, xshift = 4.5cm] {{\centering\cref{lm:Qdecomp}\\} Decomposition of Rational Mixed Integer Polyhedron \\ \centering\cref{lm:boundedref} \\ Bounded Region with Small Complexity};
\node (t20) [point, below of=l1314, yshift = -0.3cm] {\centering\cref{lm:zdg:boundary} \\ Adding a Uniform Bound on $x$ without Changing the Value};
\node (t19) [point, below of=t20, yshift = 0.3cm] {\centering\cref{thm:zdg}\\ Asymptotic Zero Duality Gap};

\draw [arrow](l12)--(t17);
\draw [arrow](t17)--(t18);
\draw [arrow](t18)--(t19);
\draw [arrow](l1314)--(t20);
\draw [arrow](t20)--(t19);
\end{tikzpicture}

Another flowchart is provided respect to the proof of \cref{thm:main:flexeald}.

\begin{tikzpicture}[node distance=2cm, text width=5.5cm]
\node (l12) [point] {\centering\cref{lm:NLPQPIP}\\ Equivalence of Boundedness of MIQP and its Continuous Relaxation};
\node (t21) [point, below of=l12, yshift = -0.3cm] {\centering\cref{thm:fixeald_ge}\\ A Sufficient Condition for Exact Penalty};
\node (t19) [point, below of=t21, yshift = 0.3cm] {\centering\cref{thm:zdg}\\ Asymptotic Zero Duality Gap};
\node (l1314) [point, right of=l12, xshift = 4.5cm] {{\centering\cref{lm:Qdecomp}\\} Decomposition of Rational Mixed Integer Polyhedron \\ \centering\cref{lm:boundedref} \\ Bounded Region with Small Complexity};
\node (t23) [point, below of=l1314, yshift = -0.3cm] {\centering\cref{lm:eald:boundary} \\ A Uniform Bound on $x$ Independent of $\rho$};
\node (t22) [point, below of=t23, yshift = 0.3cm] {\centering\cref{thm:fixeald}\\ Exact Penalty Representation for $L^\infty$ Norm};
\node (t24) [point, below of=t22] {\centering\cref{thm:flexeald}\\ Exact Penalty Representation for Any Norm};

\draw [arrow](l12)--(t21);
\draw [arrow](t19)--(t22);
\draw [arrow](l1314)--(t23);
\draw [arrow](t23)--(t22);
\draw [arrow](t22)--(t24);
\end{tikzpicture}
\section{Preliminary Results}
\label{sec:preliminary}
Several useful lemmas are presented in this section.
\begin{lemma}[Equivalence of Boundedness of MIQP and its Continuous Relaxation]
\label{lm:NLPQPIP}
Suppose the MIQP is feasible (i.e. $z^{\IP}<+\infty$). Then the following three conditions are equivalent:
\begin{enumerate}
\item $z^{\NLP}$ is bounded.
\item $\inf\{c^\top x|Ax=0, Ex\leq 0, Qx=0\}$ is bounded.
\item $z^{\IP}$ is bounded.
\end{enumerate}
\end{lemma}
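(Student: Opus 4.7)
The plan is to prove the three equivalences in a cycle $(1) \Rightarrow (3) \Rightarrow (2) \Rightarrow (1)$. The first link is essentially free: the continuous relaxation has a larger feasible set than the MIQP, so $z^{\NLP} \le z^{\IP}$, and any finite lower bound on $z^{\NLP}$ immediately transfers to $z^{\IP}$.

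For $(3) \Rightarrow (2)$ I would argue by contrapositive. Suppose (2) fails. Since $x = 0$ is feasible with value $0$, the infimum being $-\infty$ means there is a nonzero $x^{*}$ in the cone $K := \{x : Ax = 0,\, Ex \le 0,\, Qx = 0\}$ with $c^\top x^{*} < 0$. Because $A$, $E$, $Q$ are rational, $K$ is a rational polyhedral cone, so one may take $x^{*}$ rational; rescaling $x^{*}$ by a suitable positive integer then makes every coordinate of $x^{*}$ integer, in particular the last $n_2$ coordinates. Let $x^{0}$ be any MIQP-feasible point, which exists by \cref{ass:problem}. Then $x^{0} + k x^{*}$ with $k \in \Z_{+}$ remains MIQP-feasible (the linear constraints, bound constraints, and integrality of the last $n_2$ coordinates are all preserved), and expanding the objective while using $Qx^{*} = 0$ collapses every cross term to $c^\top x^{0} + k\, c^\top x^{*} + \frac{1}{2} (x^{0})^\top Q x^{0}$, which tends to $-\infty$ as $k \to \infty$. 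Hence $z^{\IP} = -\infty$, contradicting (3).

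The main obstacle is $(2) \Rightarrow (1)$: one must produce a uniform lower bound on a quadratic from a condition that only controls a linear quantity. My plan is a Farkas-plus-completing-the-square argument. Condition (2) says $c$ lies in the polar of the polyhedral cone $K$; writing each equality as two inequalities and applying Farkas' lemma yields multipliers $\mu \in \R^{m}$, $\nu \in \R^{m_2}$ with $\nu \ge 0$, and $\sigma \in \R^{n}$ such that $c = A^\top \mu - E^\top \nu + Q\sigma$. For any NLP-feasible $x$ (so $Ax = b$ and $Ex \le f$), this decomposition gives $c^\top x = \mu^\top b - \nu^\top E x + \sigma^\top Q x \ge \mu^\top b - \nu^\top f + \sigma^\top Q x$. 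Adding $\frac{1}{2} x^\top Q x$ and completing the square yields $c^\top x + \frac{1}{2} x^\top Q x \ge \mu^\top b - \nu^\top f + \frac{1}{2}(x + \sigma)^\top Q (x + \sigma) - \frac{1}{2}\sigma^\top Q \sigma \ge \mu^\top b - \nu^\top f - \frac{1}{2}\sigma^\top Q \sigma$, a lower bound that does not depend on $x$. Therefore $z^{\NLP} > -\infty$, which closes the cycle. The two delicate points are the passage from a continuous recession direction to an integer one in $(3) \Rightarrow (2)$, which depends crucially on rationality of the data, and the choice in $(2) \Rightarrow (1)$ to complete the square using precisely the Farkas multiplier $\sigma$ associated with the $Qx = 0$ block so that the leftover is constant in $x$.
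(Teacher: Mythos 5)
Your proposal is correct and follows essentially the same route as the paper: the cycle $(1)\Rightarrow(3)\Rightarrow(2)\Rightarrow(1)$, with the contrapositive rational-ray argument for $(3)\Rightarrow(2)$ and the Farkas decomposition of $c$ for $(2)\Rightarrow(1)$. The only cosmetic difference is that you complete the square directly to bound $\sigma^\top Qx+\tfrac12 x^\top Qx$ below by $-\tfrac12\sigma^\top Q\sigma$, whereas the paper derives the identical bound via the eigendecomposition of $Q$.
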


\begin{proof}
$1\Rightarrow3$ is obvious.

We first show $3\Rightarrow2$, or equivalently $\neg 2\Rightarrow \neg 3$. Note that the problem in 2 is always feasible. Assuming $\neg 2$, the problem $\{c^\top x\leq -1, Ax=0,Ex\leq 0,Qx=0\}$ is now feasible and there exists a rational solution since the problem is rational. Denote such a rational solution as $r$ and without loss of generality, we assume that $r$ is integral since we can scale $r$ with a positive coefficient.

Now select any feasible solution for 3, as $x$. Then we know that $x+tr$ is still feasible for 3 for any $t\in\Z_+$. In addition, $c^\top(x+tr)+\frac{1}{2}(x+tr)^\top Q (x+tr)=c^\top x+\frac{1}{2}x^\top Qx+tc^\top r\to-\infty$ as $t\to+\infty$. Therefore, we have 3 is unbounded, i.e. $3\Rightarrow 2$.

Next we show that $2\Rightarrow1$. Suppose that 2 holds. From Farkas Lemma, we know that $\exists \lambda_E\leq 0, \lambda_A, \lambda_Q$, such that
$\lambda_E^\top E+\lambda_A^\top A+\lambda_Q^\top Q=c^\top$. Now considering the NLP
\[
\begin{aligned}
z^{\NLP}=\inf\ &c^\top x+\frac{1}{2}x^\top Q x\\
\st\ &Ax=b\\
&Ex\leq f\\
=\inf\ &(\lambda_E^\top E+\lambda_A^\top A+\lambda_Q^\top Q)x+\frac{1}{2}x^\top Qx\\
\st\ &Ax=b\\
&Ex\leq f\\
\geq\inf\ &\lambda_E^\top f+\lambda_A^\top b + \lambda_Q^\top Qx+\frac{1}{2}x^\top Qx\\
\st\ &Ax=b\\
&Ex\leq f.
\end{aligned}
\]
To show that $\lambda_Q^\top Q x +\frac{1}{2} x^\top Q x$ is bounded, we first write down the orthogonal decomposition of $Q$ as $R^\top\Lambda R$ where $R$ is orthogonal and $\Lambda$ is diagonal. Therefore, $\lambda_Q^\top Q x +\frac{1}{2} x^\top Q x=\sum_i \Lambda_{ii}(\lambda_Q^\top r_i+\frac{1}{2}x^\top r_i)x^\top r_i\geq \sum_i-\Lambda_{ii} (\lambda_Q^\top r_i)^2/2=-\lambda_Q^\top Q\lambda_Q/2$, where $r_i$ is the $i$-th row of the orthogonal matrix $R$. Note that the bound is attainable if we take $x=-\lambda_Q$. Therefore, we arrive at 1, i.e. $z^\NLP$ is bounded.
\end{proof}

\begin{remark} We note here that we are able to prove that the boundedness of the nonlinear integer problem implies boundedness of its continuous relaxation, using the fact that the data is rational. This is very similar to the Fundamental theorem of Integer Programming \cite{meyer1974existence}. Note that other similar results may be proven under different assumptions such as existence of integer point in the interior of  continuous relaxation, see \cite{dey2013some,moran2012strong}. Also see \cite{moran2018subadditive}.
\end{remark}

\begin{lemma}[Decomposition of Rational Mixed Integer Polyhedron]
\label{lm:Qdecomp}
Given a rational positive semidefinite matrix $Q$, any rational mixed integer polyhedron $P\cap(\R^{n_1}\times \Z^{n_2})=\{x:Cx\leq d\}\cap(\R^{n_1}\times \Z^{n_2})$ can be decomposed (with respect to $Q$) as $\cup_i (P_i\cap(\R^{n_1}\times \Z^{n_2})+\icone(R_i))$ satisfying the following properties:
\begin{enumerate}[(a)]
\item Each $P_i$ is a rational polytope.
\item Each $\cone(R_i)$ is a rational, simple and pointed cone.
\item For every $\cone(R_i)$, if a face $C'$ satisfies that $\exists x\in C'\backslash\{0\}$, $x^\top Qx=0$, then there exists an extreme ray $v$ of $C'$ with $v^\top Qv=0$.
\item Each polytope $P_i$ and each vector in $R_i$ has $P,Q$-small complexity.
\end{enumerate}
\end{lemma}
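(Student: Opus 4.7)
The plan is in two stages: first produce a decomposition satisfying (a), (b), (d) by a classical Meyer-type construction, then refine the cones to enforce (c) while preserving the other properties. For the initial stage, I triangulate the recession cone of $P$ into rational pointed simplicial cones and, for each, enumerate the integer points of the fundamental parallelepiped to obtain a Meyer-style representation $P\cap(\R^{n_1}\times\Z^{n_2})=\bigcup_i(P_i\cap(\R^{n_1}\times\Z^{n_2})+\icone(R_i))$ with rational polytopes $P_i$ and rational pointed simplicial $\cone(R_i)$. Standard complexity bounds on extreme-ray sizes, pulling-triangulation sizes, and integer points of fundamental parallelepipeds (via Hermite/Smith normal forms) make every $P_i$ and every vector of every $R_i$ $P$-small, so (a), (b), (d) hold at this stage.

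To enforce (c) I use the following reformulation. Since $Q$ is rational and positive semidefinite, $x^\top Q x=0$ iff $Q x=0$, so $L:=\ker Q$ is a rational linear subspace of $Q$-small complexity; and for a pointed simplicial cone $\cone(R)$, property (c) is equivalent to $\cone(R)\cap L$ being a face of $\cone(R)$. One direction: if $\cone(R)\cap L=\cone(S)$ for some $S\subseteq R$, then every face $C'=\cone(T)$ of $\cone(R)$ meeting $L$ nontrivially has $C'\cap L\subseteq\cone(T\cap S)$ (using that in a simplicial cone $\cone(T)\cap\cone(S)=\cone(T\cap S)$), so $T\cap S$ is nonempty and any $r\in T\cap S$ is an extreme ray of $C'$ lying in $L$. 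The converse follows from the linear independence of $R$ together with (c) applied to the face of $\cone(R)$ generated by those $r_j\in R$ that do not lie in $L$.

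To achieve this equivalent condition, for each $\cone(R_i)$ I compute the rational extreme rays $v_{i,1},\dots,v_{i,m_i}$ of $\cone(R_i)\cap L$ (extreme rays of a rational cone defined by $R_i$ and $L$), and then re-triangulate $\cone(R_i)$ into rational pointed simplicial sub-cones $\cone(R_{i,j})$ using the augmented generating set $R_i\cup\{v_{i,1},\dots,v_{i,m_i}\}$, arranged so that $\cone(R_i)\cap L$ is itself the union of those sub-cones contained in $L$. The one subtlety is that refining $\cone(R_i)$ into $\cone(R_{i,j})$ does not automatically refine $\icone(R_i)$ into $\bigcup_j\icone(R_{i,j})$, because integer conic hulls are not preserved under triangulation. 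I correct this sub-cone by sub-cone: I enumerate the finite set $F_{i,j}$ of integer points of the fundamental parallelepiped of $R_{i,j}$ and replace $P_i$ by the finitely many translated polytopes $P_i+f$ for $f\in F_{i,j}$, so that the union $\bigcup_{i,j,f}((P_i+f)\cap(\R^{n_1}\times\Z^{n_2})+\icone(R_{i,j}))$ still equals $P\cap(\R^{n_1}\times\Z^{n_2})$.

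The main obstacle will be the quantitative part of (d) after the refinement: showing simultaneously that each $v_{i,\ell}$ is of size polynomial in $\F$, that the pulling-triangulation generators $R_{i,j}$ remain $\F$-small, and that both the number and the bit-sizes of parallelepiped integer points $f\in F_{i,j}$ are polynomially controlled via Smith/Hermite-normal-form estimates of the kind developed in \cite{del2017mixed}. No single one of these estimates is hard in isolation, but chaining them so that every $P_i+f$ and every generator in $R_{i,j}$ ends up $\F$-small, while still respecting the face structure needed for (c), is where the technical bookkeeping lies.
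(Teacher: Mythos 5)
Your outline is sound and, once unpacked, it is essentially a from-scratch reconstruction of the three results of \cite{del2017mixed} that the paper's proof simply cites: the paper (after splitting $P$ into pointed pieces by signing the coordinates) invokes Proposition 1 of \cite{del2017mixed} for the decomposition into rational polytopes plus simple pointed cones of small complexity, Lemma 2 of \cite{del2017mixed} for the refinement of the cones that yields property (c), and only then Proposition 2 of \cite{del2017mixed} to convert each $\left(P_i+\cone(R_i)\right)\cap(\R^{n_1}\times\Z^{n_2})$ into a mixed integer polytope plus an integer cone. Your reformulation of (c) for a simplicial cone as ``$\cone(R)\cap\ker Q$ is a face of $\cone(R)$,'' with both directions argued via the linear independence of $R$, is correct and is a clean way to package the content of that Lemma 2. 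The one structural difference is order of operations: you perform the integer (fundamental parallelepiped) decomposition \emph{before} refining the cones, which forces the extra correction pass you describe --- re-enumerating lattice points of the fundamental parallelepipeds of the refined cones $R_{i,j}$ and translating the $P_i$ --- whereas the paper's ordering (refine the cones first, take integer points last) avoids this second pass entirely; you would save real bookkeeping by adopting that order. Two steps you assert but do not prove are exactly where the cited reference does its work: (i) that $\cone(R_i)$ admits a triangulation on the ray set $R_i\cup\{v_{i,1},\dots,v_{i,m_i}\}$ in which $\cone(R_i)\cap\ker Q$ is a subcomplex (a placing triangulation that inserts the $v_{i,\ell}$ first achieves this, after which you need the small observation that the convex set $\cone(R_{i,j})\cap\ker Q$, being a union of faces of the simplicial cone $\cone(R_{i,j})$, is a single face); and (ii) the chained polynomial bounds needed for (d). Neither is a wrong turn, but as written they are promissory notes for precisely the estimates the paper outsources to \cite{del2017mixed}.
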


\begin{proof}
This lemma is a direct outcome of \cite[Proposition 1, Proposition 2, Lemma 2]{del2017mixed}.

First, if $P$ is not pointed, we can decompose $P$ into at most $2^{n_1+n_2}$ pointed rational mixed integer polyhedron by separating $x_k\leq 0$ and $x_k\geq 0$ for all $k$. Therefore, we simply assume $P$ is pointed henceforth.

Next, using \cite[Proposition 1]{del2017mixed}, we can decompose $P$ as $P=\cup_{i,K^1\in\mathcal{K}^1}(P^1_i+\cone(R^1_{K^1}))$, while conditions (a), (b), (d) are met.

Later, using \cite[Lemma 2]{del2017mixed}, we are able to decompose $\cone(R_k)$ into a union of rational, simple and pointed cones, which satisfies condition (c) and maintains (a), (b), (d). Therefore, $P=\cup_{i,K^2\in\mathcal{K}^2}(P^1_i+\cone(R^2_{K^2}))$.

Finally, we use \cite[Proposition 2]{del2017mixed} and decompose $(P^1_i+\cone(R^2_{K^2})) \cap(\R^{n_1}\times \Z^{n_2})$ into a mixed integer rational polytope plus an integer cone, which completes the proof.
\end{proof}

\begin{lemma}[Bounded Region with Small Complexity \cite{del2017mixed}]
\label{lm:boundedref}
Let $P \subseteq R^n$ be a polytope and $R\subseteq R^n$ be a finite set of vectors. Given a rational positive semidefinite matrix $Q$ of small complexity, suppose $P+\cone(R)$ satisfies the following properties:
\begin{enumerate}[(a)]
\item $P$ is a rational polytope.
\item $\cone(R)$ is a rational, simple and pointed cone.
\item $\forall x\in\cone(R)\backslash\{0\}$, $x^\top Qx>0$.
\item $P$ and each vector in $R$ has small complexity.
\end{enumerate}
Then, for any $\eta\in\R^n,\mu\in\R$ of small complexity, there exists $M$ of small complexity such that $\{x\in P+\cone(R):
\frac{1}{2}x^\top Qx+\eta^\top x\leq\mu\} \subset \{x:\|x\|\leq M\}$. In addition, such $M$ exists for any norm.
\end{lemma}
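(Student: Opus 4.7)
The plan is to parametrize any $x \in P + \cone(R)$ as $x = p + R\lambda$ with $p \in P$ and $\lambda \in \R_+^{|R|}$, identifying $R$ with the matrix whose columns are its elements. Substituting, the constraint $\tfrac{1}{2}x^\top Q x + \eta^\top x \leq \mu$ becomes
\[
\tfrac{1}{2}\lambda^\top (R^\top Q R)\lambda \;+\; (Qp + \eta)^\top R\lambda \;+\; \tfrac{1}{2}p^\top Q p + \eta^\top p \;\leq\; \mu,
\]
a scalar quadratic inequality in $\lambda$ (given $p$) whose coefficients depend polynomially on $p$, $Q$, $\eta$. My goal is then to convert this into separate bounds on $\|p\|$ and $\|\lambda\|$, each of small complexity.

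First, using (a) and (d), the vertices of the rational polytope $P$ have small complexity by standard encoding-size bounds, so $\|p\| \leq B_P$ for some $B_P$ of small complexity. Next, because $\cone(R)$ is pointed and simple, $R\lambda = 0$ iff $\lambda = 0$, and hence by (c) we have $\lambda^\top (R^\top Q R)\lambda > 0$ for every nonzero $\lambda \geq 0$. Compactness of $\{\lambda \geq 0 : \|\lambda\| = 1\}$ then yields an $\alpha > 0$ such that $\lambda^\top (R^\top Q R)\lambda \geq \alpha \|\lambda\|^2$ on $\R_+^{|R|}$. Substituting into the expanded inequality, I obtain a one-variable estimate of the form $\tfrac{\alpha}{2}\|\lambda\|^2 - C_1 \|\lambda\| - C_2 \leq \mu$, with $C_1, C_2$ polynomial in $B_P, \|Q\|, \|\eta\|$; solving gives $\|\lambda\| \leq L$, and therefore $\|x\| \leq B_P + \|R\|\,L =: M$, say, for the Euclidean norm.

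The main obstacle is showing that $\alpha^{-1}$, and hence $M$, has small complexity: this amounts to a polynomial lower bound on the minimum of a rational PSD quadratic form on the nonnegative unit sphere. I would derive this from the rational structure of $R$ and $Q$, following the complexity analysis of \cite{del2017mixed}; once it is in place, $M$ inherits small complexity from $B_P$, $\alpha^{-1}$, $\|R\|$, $\|Q\|$, $\|\eta\|$, $\mu$. The extension to an arbitrary norm is then routine, since any norm on $\R^n$ is equivalent to $\|\cdot\|_2$ and $M$ serves only as an upper estimate, so the same argument can be repeated with the target norm (or with the Euclidean bound rescaled by the equivalence constant) without losing small complexity.
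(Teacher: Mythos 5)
First, a point of context: the paper does not actually prove \cref{lm:boundedref} --- it is imported as a black box from \cite{del2017mixed} --- so your attempt has to be judged on its own rather than against an in-paper argument. The skeleton you build is the right one and the boundedness half is sound: writing $x=p+R\lambda$, bounding $\|p\|$ by the vertex complexity of the rational polytope $P$, observing that simplicity of $\cone(R)$ together with hypothesis (c) forces $\lambda^\top (R^\top QR)\lambda>0$ for every nonzero $\lambda\ge 0$, and extracting a one-variable quadratic inequality $\tfrac{\alpha}{2}\|\lambda\|^2-C_1\|\lambda\|-C_2\le 0$ all check out, with $C_1,C_2,B_P,\|R\|$ visibly of small complexity.

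The gap is exactly where you flag it, and it is not cosmetic: the lower bound $\alpha$ is the entire content of the phrase ``small complexity'' in the conclusion. Without a proof that $\alpha^{-1}$ has polynomially bounded encoding size, your argument only establishes that the sublevel set is bounded --- which already follows from a soft recession-cone argument (the recession cone of the sublevel set sits inside $\{r\in\cone(R):Qr=0\}=\{0\}$ by (c)) and is not what the lemma asserts. Deferring this to ``the complexity analysis of \cite{del2017mixed}'' is circular in spirit, since that analysis \emph{is} the lemma. The step is true and can be closed as follows: set $\alpha:=\min\{\lambda^\top (R^\top QR)\lambda:\lambda\ge 0,\ \mathbf{1}^\top\lambda=1\}$, normalizing on the simplex rather than the Euclidean sphere so that the feasible region is a rational polytope (switching between the two normalizations only costs a factor polynomial in $|R|$). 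This is a convex QP with rational data of small complexity whose optimal value you have already shown to be positive; by the standard complexity results for convex quadratic programming (e.g.\ \cite{vavasis1990quadratic}, via the observation that some optimal solution solves a rational linear system read off from the KKT conditions for one choice of active set), the optimal value is a rational number of polynomially bounded size, hence $\alpha\ge 2^{-\mathrm{poly}}$ and $\alpha^{-1}$ has small complexity. With that in place your $M$ inherits small complexity, and the extension to an arbitrary norm by norm equivalence is routine as you say.
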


\begin{remark}
For any rational mixed integer polyhedron, \cref{lm:Qdecomp} provides a decomposition with respect to $Q$, while maintaining a small complexity. In addition, for any part of the decomposition, if no extreme ray $v$ has $v^\top Q v=0$ then no ray has $x^\top Q x=0$ (i.e. $x^\top Q x>0$ for any ray).

\cref{lm:boundedref} shows that under the conditions that $x^\top Q x>0$ for any ray, the optimal solution of the optimization problem $\{\min\frac{1}{2}x^\top Q x+\eta^\top x:x\in P+\cone(R)\}$ will have small complexity. In the lemma, this property is presented in the form of a feasibility problem.

The two lemmas will be needed for proving bounds in our proofs of theorems.
\end{remark}
\section{Asymptotic Zero Duality Gap}
\label{sec:zerogap}
In this section, we show that under mild conditions on the penalty function the ALD duality gap vanishes as the penalty weight $\rho$ goes to infinity. 
%
\begin{assumption}[Conditions for Asymptotic Zero Duality Gap]
\label{assum:penalty}
We assume $\psi$ is proper, nonnegative, lower-semicontinuous and level-bounded, that is: $\psi(0)=0$; $\psi(u)>0$ for all $u\neq 0$; $\lim_{\delta\downarrow 0}{\rm{diam}}\{u:\psi(u)\leq \delta\}=0$; ${\rm{diam}}\{u:\psi(u)\leq \delta\}<\infty$ for all $\delta>0$.
\end{assumption}
\begin{proposition}[Approximation of the Penalty Term]
\label{prop:penaltylimit}
For given $\rho>0$ and $\epsilon>0$, define $w_{\rho,\epsilon}^*$ as
\begin{equation}
\label{eq:prop:penaltylimit}
\begin{aligned}
w_{\rho,\epsilon}^* :=\ \inf_{x,w}\ &w\\
\st\ &x\in X,\\
\ &\psi(b-Ax)\leq w,\\
\ &c^\top x+\frac{1}{2}x^\top Qx+\bar{\lambda}^\top(b-Ax)+\rho w-z_\rho^{\LRp}(\bar{\lambda})\leq\epsilon.
\end{aligned}
\end{equation}
Then, the limit $w^*_\rho:=\lim_{\epsilon\downarrow 0}w_{\rho,\epsilon}^*$ exists and $\lim_{\rho\to+\infty}w^*_\rho=0$.
\end{proposition}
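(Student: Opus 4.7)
My plan is to prove the two claims (existence of $w^*_\rho$, and $w^*_\rho \to 0$) from a single bound linking $\rho w$ to $z^\IP - z^\NLP$, both of which are finite under \cref{ass:problem} via \cref{lm:NLPQPIP}.

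First I would establish that the feasible region of \cref{eq:prop:penaltylimit} is nonempty for every $\rho,\epsilon>0$: by definition of the infimum $z^\LRp_\rho(\bar\lambda)$, there is some $\tilde x\in X$ with $c^\top\tilde x+\tfrac12\tilde x^\top Q\tilde x+\bar\lambda^\top(b-A\tilde x)+\rho\psi(b-A\tilde x)\le z^\LRp_\rho(\bar\lambda)+\epsilon$, and then $(\tilde x,\psi(b-A\tilde x))$ is feasible. This shows $w^*_{\rho,\epsilon}<+\infty$, and the nonnegativity of $\psi$ forces $w^*_{\rho,\epsilon}\ge 0$. Monotonicity in $\epsilon$ is immediate because tightening the constraint to $\le\epsilon$ for smaller $\epsilon$ shrinks the feasible set; thus $\epsilon\mapsto w^*_{\rho,\epsilon}$ is nonincreasing on $(0,\infty)$, so $w^*_\rho=\lim_{\epsilon\downarrow 0}w^*_{\rho,\epsilon}$ exists in $[0,+\infty]$.

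Next I would produce a uniform upper bound for $w^*_{\rho,\epsilon}$ that decays in $\rho$. The key observation, recorded in the remark right after the definitions, is that because $\bar\lambda$ is the NLP dual optimal for $Ax=b$, for every $x$ with $Ex\le f$ (in particular every $x\in X$) one has
\begin{equation*}
c^\top x+\tfrac12 x^\top Qx+\bar\lambda^\top(b-Ax)\;\ge\;z^\NLP.
\end{equation*}
Any $(x,w)$ feasible for \cref{eq:prop:penaltylimit} therefore satisfies
\begin{equation*}
z^\NLP+\rho w\;\le\;c^\top x+\tfrac12 x^\top Qx+\bar\lambda^\top(b-Ax)+\rho w\;\le\;z^\LRp_\rho(\bar\lambda)+\epsilon\;\le\;z^\IP+\epsilon,
\end{equation*}
which yields $w\le(z^\IP-z^\NLP+\epsilon)/\rho$. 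Taking the infimum over feasible $(x,w)$ gives $w^*_{\rho,\epsilon}\le(z^\IP-z^\NLP+\epsilon)/\rho$. By \cref{lm:NLPQPIP}, \cref{ass:problem} guarantees that both $z^\IP$ and $z^\NLP$ are finite, so this bound is finite for every $\rho>0$.

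Finally, I would pass to the two limits in turn. Letting $\epsilon\downarrow 0$ gives $0\le w^*_\rho\le(z^\IP-z^\NLP)/\rho$, which confirms that $w^*_\rho$ is finite and, letting $\rho\to+\infty$, yields $\lim_{\rho\to+\infty}w^*_\rho=0$. The only genuinely subtle step is the one in the preceding paragraph, namely recognizing that the NLP-dual inequality $c^\top x+\tfrac12 x^\top Qx+\bar\lambda^\top(b-Ax)\ge z^\NLP$ holds on all of $X$ (not just at NLP-feasible points), which is what turns the definition of $w^*_{\rho,\epsilon}$ into an explicit $O(1/\rho)$ bound; the level-boundedness properties of $\psi$ from \cref{assum:penalty} are not actually needed here (they enter only later, in \cref{thm:zdg}, to translate $w^*_\rho\to 0$ into $b-Ax\to 0$).
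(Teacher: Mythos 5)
Your proposal is correct and follows essentially the same route as the paper: establish feasibility of \cref{eq:prop:penaltylimit} from the definition of the infimum $z_\rho^{\LRp}(\bar{\lambda})$, use the NLP-dual inequality $c^\top x+\frac{1}{2}x^\top Qx+\bar{\lambda}^\top(b-Ax)\geq z^\NLP$ on $Ex\leq f$ together with $z_\rho^{\LRp}(\bar{\lambda})\leq z^\IP$ to get $w_{\rho,\epsilon}^*\leq(z^\IP-z^\NLP+\epsilon)/\rho$, and conclude by monotonicity in $\epsilon$ and letting $\rho\to+\infty$. The only cosmetic difference is that the paper spells out the finiteness of $z_\rho^{\LRp}(\bar{\lambda})$ as an explicit first step, which your chain $z^\NLP+\rho w\leq z_\rho^{\LRp}(\bar{\lambda})+\epsilon$ delivers implicitly.
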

\begin{proof}First we need show that the problem \cref{eq:prop:penaltylimit} is well-defined, i.e is feasible and bounded. As a first step we show that $z_\rho^{\LRp}(\bar{\lambda})$ is finite. Observe that:
\[
\begin{aligned}
z_\rho^{\LRp}(\bar \lambda )\geq &\inf\{c^\top x+\frac{1}{2}x^\top Qx+\bar{\lambda}^\top (b-Ax):x\in X\}\\
\geq &\inf\{c^\top x+\frac{1}{2}x^\top Qx+\bar{\lambda}^\top (b-Ax):Ex\leq f\}\\
\geq &\inf\{c^\top x+\frac{1}{2}x^\top Qx+\bar{\lambda}^\top (b-Ax) +\bar{\lambda}^\top_E (f-Ex)\}\\
=&z^\NLP,
\end{aligned}
\]
and the boundedness of $z^\NLP$ is given by \cref{lm:NLPQPIP}.

From the feasibility of the original problem we know that there exists an $x$ feasible for $z_\rho^{\LRp}(\bar{\lambda})$. Therefore, we are able to find $x\in X$ such that $c^\top x+\frac{1}{2}x^\top Qx+\bar{\lambda}^\top (b-Ax)+\psi(b-Ax)\leq \epsilon+ z_\rho^{\LRp}(\bar{\lambda})$, which means $(x,w=\psi(b-Ax))$ is feasible for \cref{eq:prop:penaltylimit}.
We also have $w^*_{\rho,\epsilon}\geq 0$ from the non-negativity of $\psi$. Thus, \cref{eq:prop:penaltylimit} is feasible and bounded.

In addition, we have $z_\rho^{\LRp}(\bar{\lambda})\leq z^{\IP}$ and for $x$ satisfying $Ex \leq f$ we have that $c^\top x+\frac{1}{2}x^\top Qx+\bar{\lambda}^\top(b-Ax)\geq \inf\{c^\top x+\frac{1}{2}x^\top Qx+\bar{\lambda}^\top (b-Ax) +\bar{\lambda}^\top_E (f-Ex)\}=z^\NLP$. Therefore,
\begin{align*}
w_{\rho,\epsilon}^*\leq& \frac{1}{\rho} \{z_\rho^{\LRp}(\bar{\lambda})+\epsilon -[c^\top x+\frac{1}{2}x^\top Qx+\bar{\lambda}^\top(b-Ax):x\in X]\}\\
\leq &\frac{1}{\rho}(z^{\IP}+\epsilon-z^{\NLP}).
\end{align*}

By taking $\epsilon\downarrow 0$ we have
\begin{equation}\label{eq:wrhostar}
0\leq w_\rho^*=\lim_{\epsilon\downarrow 0} w_{\rho,\epsilon}^*\leq\lim_{\epsilon\downarrow 0}\frac{1}{\rho}(z^{\IP}+\epsilon-z^{\NLP})=\frac{1}{\rho}(z^{\IP}-z^{\NLP}).\end{equation}
In addition, as $\epsilon\downarrow 0$ the feasible region of \cref{eq:prop:penaltylimit} becomes smaller, which indicates that $w_{\rho,\epsilon}^*$ is non-decreasing. Therefore $w_\rho^*=\lim_{\epsilon\downarrow 0} w_{\rho,\epsilon}^*$ exists. 

By taking $\rho\to+\infty$ we therefore obtain $\lim_{\rho\to+\infty} w^*_\rho=0$.
\end{proof}

\begin{lemma}[Equivalent Form of $z^\LRp_\rho(\bar{\lambda})$]\label{lm:penaltyrelax}
Consider $w^*_\rho$ as in \cref{prop:penaltylimit} and define $\tilde{z}_\rho^{\LRp}(\bar{\lambda})$ as
\begin{equation}\label{eq:penaltyrelax:tildez}
\begin{aligned}
\tilde{z}_\rho^{\LRp}(\bar{\lambda}):=\ \inf_{x,w}\ &c^\top x +\frac{1}{2}x^\top Q x+\bar{\lambda}^\top (b-Ax)+\rho w\\
\st\ &x\in X,\\
&\psi(b-Ax)\leq w,\\
&(1-\delta)w_\rho^*\leq w\leq(1+\delta)w_\rho^*.
\end{aligned}
\end{equation}
Then, for any $\delta\in(0,1)$,
\begin{equation}
\label{eq:LRptb}
\begin{aligned}
z_\rho^{\LRp}(\bar{\lambda})=&\tilde{z}_\rho^{\LRp}(\bar{\lambda})\\
\geq&\inf_x\ c^\top x+\frac{1}{2} x^\top Q x+\bar{\lambda}^\top(b-Ax)+\rho(1-\delta)w_\rho^*\\
&\st\ x\in X,\\
&\ \ \ \ \ \psi(b-Ax)\leq (1+\delta)w_\rho^*,\\
\geq&\inf_x\ c^\top x+\frac{1}{2} x^\top Q x+\bar{\lambda}^\top(b-Ax)\\
&\st\ x\in X,\\
&\ \ \ \ \ \psi(b-Ax)\leq (1+\delta)w_\rho^*.
\end{aligned}
\end{equation}
\end{lemma}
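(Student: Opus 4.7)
The statement splits into the equality $z^{\LRp}_\rho(\bar\lambda) = \tilde z^{\LRp}_\rho(\bar\lambda)$ and two subsequent relaxation inequalities. My plan is to prove the equality first, since that carries the real content, and then dispatch the two inequalities via elementary manipulations of the box constraint on $w$.

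\textbf{The equality.} The first step is to rewrite $z^{\LRp}_\rho(\bar\lambda)$ by introducing the auxiliary variable $w$:
\[ z^{\LRp}_\rho(\bar\lambda) \;=\; \inf_{x,w}\bigl\{\, c^\top x + \tfrac12 x^\top Q x + \bar\lambda^\top(b-Ax) + \rho w \,:\, x \in X,\ \psi(b-Ax) \le w \,\bigr\}, \]
which is valid because $\rho > 0$ forces $w = \psi(b-Ax)$ at optimality. In this form, $\tilde z^{\LRp}_\rho(\bar\lambda)$ differs only by the additional box constraint $(1-\delta)w^*_\rho \le w \le (1+\delta)w^*_\rho$, yielding $\tilde z^{\LRp}_\rho(\bar\lambda) \ge z^{\LRp}_\rho(\bar\lambda)$ immediately. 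For the reverse inequality I will invoke \cref{prop:penaltylimit}: for any $\epsilon, \tau > 0$, the definition of $w^*_{\rho,\epsilon}$ as an infimum gives a feasible $(x_\epsilon, w_\epsilon)$ satisfying $x_\epsilon \in X$, $\psi(b-Ax_\epsilon) \le w_\epsilon$, objective value at most $z^{\LRp}_\rho(\bar\lambda) + \epsilon$, and $w^*_{\rho,\epsilon} \le w_\epsilon \le w^*_{\rho,\epsilon} + \tau$. Because $w^*_{\rho,\epsilon}$ is monotone in $\epsilon$ and approaches $w^*_\rho$ from below as $\epsilon \downarrow 0$ (as noted in the proof of \cref{prop:penaltylimit}), for all sufficiently small $\epsilon$ and $\tau$ we have $w_\epsilon \in [(1-\delta)w^*_\rho,\,(1+\delta)w^*_\rho]$, so $(x_\epsilon, w_\epsilon)$ is feasible for $\tilde z^{\LRp}_\rho(\bar\lambda)$ with objective at most $z^{\LRp}_\rho(\bar\lambda) + \epsilon$. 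Sending $\epsilon \downarrow 0$ yields the reverse inequality.

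\textbf{The two relaxation inequalities.} For the first displayed inequality, take any $(x,w)$ feasible for $\tilde z^{\LRp}_\rho(\bar\lambda)$. The chain $\psi(b-Ax) \le w \le (1+\delta)w^*_\rho$ shows $x$ is feasible for the relaxation, while $\rho w \ge \rho(1-\delta)w^*_\rho$ shows the $\tilde z$-objective weakly exceeds the relaxation's objective at $x$, so taking infima on both sides gives the desired bound. The second displayed inequality is obtained by dropping the non-negative constant $\rho(1-\delta)w^*_\rho$ from the objective.

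\textbf{Main obstacle.} The delicate case is $w^*_\rho = 0$: the box constraint collapses to $w = 0$, forcing $\psi(b-Ax) = 0$ and hence $Ax = b$, so that $\tilde z^{\LRp}_\rho(\bar\lambda) = z^{\IP}$. The equality then demands $z^{\LRp}_\rho(\bar\lambda) = z^{\IP}$, which I plan to recover by taking a sequence $(x_\epsilon, w_\epsilon)$ with $w_\epsilon \to 0$, using the level-boundedness of $\psi$ together with $\psi(u) > 0$ for $u \neq 0$ to conclude $b - Ax_\epsilon \to 0$, and then combining the closedness of $X$ with \cref{lm:NLPQPIP} (to rule out recession directions that could send the sequence to infinity while keeping the quadratic objective bounded) to extract a limit point $\bar x \in X$ with $A\bar x = b$ whose objective matches $z^{\LRp}_\rho(\bar\lambda)$, yielding $z^{\LRp}_\rho(\bar\lambda) \ge z^{\IP}$ and thus equality.
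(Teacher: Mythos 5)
Your main line of argument for the equality $z^{\LRp}_\rho(\bar{\lambda})=\tilde z^{\LRp}_\rho(\bar{\lambda})$ is, in substance, the paper's: the paper argues by contradiction (if $\tilde z^{\LRp}_\rho(\bar\lambda)>z^{\LRp}_\rho(\bar\lambda)+\alpha_\rho$, then for $\epsilon<\alpha_\rho$ every feasible point of \cref{eq:prop:penaltylimit} must violate the box constraint on $w$, forcing $w^*_{\rho,\epsilon}$ outside $((1-\delta)w^*_\rho,(1+\delta)w^*_\rho)$ and contradicting $w^*_{\rho,\epsilon}\uparrow w^*_\rho$), whereas you argue directly by exhibiting near-minimizers of the infimum defining $w^*_{\rho,\epsilon}$ as feasible points of \cref{eq:penaltyrelax:tildez}. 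These are the same idea, your epigraph reformulation is valid since $\rho>0$, and the two relaxation inequalities are dispatched correctly. Note, however, that your direct version needs the window to have positive width (you must take $\tau\le\delta w^*_\rho$), so it only covers $w^*_\rho>0$ — which you honestly flag.

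The genuine gap is in your patch for the case $w^*_\rho=0$. There, as you observe, the lemma amounts to the strong claim $z^{\LRp}_\rho(\bar\lambda)=z^{\IP}$ at a \emph{finite} $\rho$, and your plan — take $(x_\epsilon)$ with $\psi(b-Ax_\epsilon)\to 0$ and extract a limit point using closedness of $X$ together with \cref{lm:NLPQPIP} — does not go through. Neither \cref{ass:problem} nor \cref{lm:NLPQPIP} makes the near-optimal level sets bounded: there can be rational recession directions $r$ of $\{Ex\le f\}$ with $Ar=0$, $Qr=0$ and $c^\top r=0$ (boundedness of $z^{\IP}$ only rules out $c^\top r<0$), and along such directions a minimizing sequence can escape to infinity while keeping both $\psi(b-Ax_\epsilon)$ and the objective under control, so no limit point need exist. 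Repairing this requires showing the infimum can be restricted to a bounded region independent of $\epsilon$, which is precisely the content of \cref{lm:zdg:boundary} and rests on the polytope-plus-integral-cone decomposition of \cref{lm:Qdecomp}, the elimination of rays with $Qr=0$, and the boundedness estimate of \cref{lm:boundedref}. In effect your case split front-loads the hardest part of \cref{thm:zdg} into this lemma, and the sentence you offer in its place is not a proof. (To be fair, the paper's own contradiction argument is also silent when $w^*_\rho=0$, since the open interval it invokes is then empty; you have located a real subtlety, but the proposed fix does not close it.)
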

\begin{proof}
Note that the definition of $\tilde z_{\rho}^{\LRp} (\bar \lambda)$ is the same as that of $z_{\rho}^{\LRp} (\bar \lambda)$ except for the additional constraint $(1 - \delta)w^*_{\rho} \leq w \leq (1 + \delta)w^*_{\rho}$. Suppose $\alpha_\rho:=\tilde{z}_\rho^{\LRp}(\bar{\lambda})-z_\rho^{\LRp}(\bar{\lambda})>0$ by contradiction. Then, for all $(x,w)$ feasible to \cref{eq:penaltyrelax:tildez} we have
\[c^\top x +\frac{1}{2}x^\top Q x+\bar{\lambda}^\top (b-Ax)+\rho w\geq \tilde{z}_\rho^{\LRp}(\bar{\lambda})=z_\rho^{\LRp}(\bar{\lambda})+\alpha_\rho,\]
which implies $(x,w)$ is infeasible for \cref{eq:prop:penaltylimit} if $\epsilon<\alpha_\rho$. Hence, $w^*_{\rho,\epsilon}\notin ((1-\delta)w_\rho^*,(1+\delta)w_\rho^*$), a contradiction. Therefore $\tilde{z}_\rho^{\LRp}(\bar{\lambda})=z_\rho^{\LRp}(\bar{\lambda})$ and the inequalities \cref{eq:LRptb} are straightforward to verify.
\end{proof}

We are now ready to present the asymptotic zero duality gap.
\AZDP*
\begin{proof}
$z^{\LDp}_\rho$ does not decrease as $\rho$ increases. Therefore, it is then sufficient to show that $\sup_{\rho\geq 1}z^{\LDp}_\rho=z^{\IP}$ under the assumption.

Let $\delta\in(0,1)$, and we have
\begin{subequations}
\begin{align}
  z_\rho^{\LDp}&\geq z_\rho^{\LRp}(\bar{\lambda}) \nonumber\\
  &\geq\inf_{x}\{c^\top x+\frac{1}{2}x^\top Qx+\bar{\lambda}^\top(b-Ax):x\in X,\psi(b-Ax)\leq(1+\delta)w_\rho^*\} \label{eq:zdg:1a} \\
  &\geq\inf_{x}\{c^\top x+\frac{1}{2}x^\top Qx+\bar{\lambda}^\top(b-Ax):x\in X,\psi(b-Ax)\leq\frac{2}{\rho}(z^\IP-z^\NLP)\} \label{eq:zdg:1b} \\
  &\geq\inf_{x}\{c^\top x+\frac{1}{2}x^\top Qx+\bar{\lambda}^\top(b-Ax):x\in X,\|b-Ax\|_\infty\leq\kappa_\rho\} (\geq z^\NLP) \label{eq:zdg:1c} 
\end{align}
\end{subequations}
where $\kappa_\rho:=\diam\{u:\psi(u)\leq\frac{2}{\rho}(z^\IP-z^\NLP)\}$ which is obviously non-increasing with respect to $\rho$. \cref{eq:zdg:1a} is guaranteed by \cref{lm:penaltyrelax}. \cref{eq:zdg:1b} is valid from \cref{eq:wrhostar} and \cref{eq:zdg:1c} comes from the level-boundedness of \cref{assum:penalty}. 

We will need the following lemma that provides a uniform bound $M$ on \cref{eq:zdg:1c} for $x$ independent of $\rho$.

\begin{lemma}[Adding a Uniform Bound on $x$ without Changing the Value]\label{lm:zdg:boundary}
Under the assumption that $\rho\geq 1$, $\exists M>0$ independent of $\rho$, such that
\[
\begin{aligned}
&\inf_{x}\{c^\top x+\frac{1}{2}x^\top Qx+\bar{\lambda}^\top(b-Ax):x\in X,\|b-Ax\|_\infty\leq\kappa_\rho\}\\
=&\min_{x}\{c^\top x+\frac{1}{2}x^\top Qx+\bar{\lambda}^\top(b-Ax):x\in X,\|b-Ax\|_\infty\leq\kappa_\rho,\|x\|_\infty\leq M\}.
\end{aligned}
\]
\end{lemma}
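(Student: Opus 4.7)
The plan is to find a uniform bound $M$ by applying \cref{lm:Qdecomp} to a fixed feasibility envelope and then analyzing each piece of the decomposition separately.

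First I will exploit monotonicity: since $\kappa_\rho$ is non-increasing in $\rho$, for all $\rho \geq 1$ the feasible region is contained in the fixed set $Y := \{x \in X : \|b - Ax\|_\infty \leq \kappa_1\}$. I will apply \cref{lm:Qdecomp} to $Y$ with respect to $Q$, writing $Y = \bigcup_i \bigl(P_i \cap (\R^{n_1}\times\Z^{n_2}) + \icone(R_i)\bigr)$ with the four properties (a)--(d). The crucial structural consequence is that $Y$ is bounded in the direction of $Ax$ (by $\kappa_1$), so every ray $v \in R_i$ must satisfy $Av = 0$.

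Next I will split each $R_i$ into $R_i^+ := \{v : v^\top Q v > 0\}$ and $R_i^0 := \{v : v^\top Q v = 0\}$. For $v \in R_i^0$, PSD-ness of $Q$ gives $Qv=0$, and combined with $Av = 0$, adding any nonnegative multiple of $v$ to a feasible point preserves feasibility and the quadratic term $x^\top Q x$; only the linear part changes by $\tilde{c}^\top v$, where $\tilde{c} := c - A^\top \bar{\lambda}$. Since the infimum on the left-hand side is bounded below by $z^{\NLP}$ (by the chain of inequalities already established in the proof of \cref{thm:zdg}), I will conclude $\tilde{c}^\top v \geq 0$ for every $v \in R_i^0$. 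Thus, given any feasible $x$, zeroing out the $R_i^0$-coefficients yields $x' \in X_i := (P_i \cap (\R^{n_1}\times\Z^{n_2})) + \icone(R_i^+)$ with the same $\|b - Ax'\|_\infty$ as $x$ and with $f(x') \leq f(x)$, where $f(x) := c^\top x + \tfrac12 x^\top Qx + \bar\lambda^\top(b-Ax)$.

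Third, I will apply \cref{lm:boundedref} to each $X_i$. Conditions (a), (b), (d) are inherited directly, since $\cone(R_i^+)$ is a face of the simple, pointed cone $\cone(R_i)$. The main obstacle is verifying condition (c), namely that $x^\top Q x > 0$ for every nonzero $x \in \cone(R_i^+)$. I will establish this by contradiction using property (c) of \cref{lm:Qdecomp}: if some $x = \sum_{v \in R_i^+} \mu_v v \neq 0$ had $x^\top Q x = 0$, the face $C' := \cone(\{v \in R_i^+ : \mu_v > 0\})$ of $\cone(R_i)$ would contain a nonzero point with vanishing $Q$-value, forcing some extreme ray of $C'$ to have $v^\top Q v = 0$, which contradicts $C' \subseteq \cone(R_i^+)$. \cref{lm:boundedref}, applied with $\eta = \tilde c$ and threshold $\mu = z^{\IP} + 1 - \bar\lambda^\top b$, then produces a constant $M_i$ such that every $x \in X_i$ with $f(x) \leq z^{\IP} + 1$ satisfies $\|x\|_\infty \leq M_i$.

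Finally, I will set $M := \max_i M_i$, which depends only on the problem data and $\bar\lambda$ and is independent of $\rho$. For any $\rho \geq 1$ and any feasible $x$ with $f(x) \leq z^{\IP} + 1$, the reduction above produces $x' \in X_i$ with $\|b - Ax'\|_\infty = \|b - Ax\|_\infty \leq \kappa_\rho$, $f(x') \leq f(x)$, and $\|x'\|_\infty \leq M$. Hence restricting the original program by $\|x\|_\infty \leq M$ does not change the infimum; the restricted feasible region is closed and bounded, so by continuity of $f$ the infimum is attained as a minimum, yielding the claim.
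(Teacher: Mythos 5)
Your proposal is correct and follows essentially the same route as the paper's proof: decompose the $\rho=1$ feasible region via \cref{lm:Qdecomp}, use $Ar=0$ for all rays, discard the rays with $r^\top Qr=0$ by appealing to the lower bound $z^{\NLP}$ on the objective, verify $x^\top Qx>0$ on the remaining cone via property (c), and apply \cref{lm:boundedref} to the sublevel set $f(x)\leq z^{\IP}+1$ to obtain a $\rho$-independent $M$. Your explicit face argument for condition (c) and the observation $\tilde c^\top v\geq 0$ are just slightly more spelled-out versions of the steps the paper performs.
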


A proof of \cref{lm:zdg:boundary} is provided later. From \cref{lm:zdg:boundary} we have $z_\rho^\LDp\geq\min_{x}\{c^\top x+\frac{1}{2}x^\top Qx+\bar{\lambda}^\top(b-Ax):x\in X,\|b-Ax\|_\infty\leq\kappa_\rho,\|x\|_\infty\leq M\}$. By taking $\rho\to+\infty$, we get
\begin{subequations}
\begin{align}
&\lim_{\rho\to+\infty}z_\rho^{\LDp} \nonumber\\
\geq&\lim_{\rho\to+\infty}\min_{x}\{c^\top x+\frac{1}{2}x^\top Qx+\bar{\lambda}^\top(b-Ax):x\in X,\|b-Ax\|_\infty\leq\kappa_\rho,\|x\|_\infty\leq M\} \nonumber\\
=&\lim_{\rho\to+\infty}\min_{\|x_2\|_\infty\leq M}\min_{\|x_1\|_\infty\leq M}\{c^\top x+\frac{1}{2}x^\top Qx+\bar{\lambda}^\top(b-Ax):x\in X,\|b-Ax\|_\infty\leq\kappa_\rho\} \nonumber\\
=&\min_{\|x_2\|_\infty\leq M}\lim_{\rho\to+\infty}\min_{\|x_1\|_\infty\leq M}\{c^\top x+\frac{1}{2}x^\top Qx+\bar{\lambda}^\top(b-Ax):x\in X,\|b-Ax\|_\infty\leq\kappa_\rho\} \label{eq:zdg:enumeration}\\
\geq&\min_{\|x_2\|_\infty\leq M}\min_{\|x_1\|_\infty\leq M}\{c^\top x+\frac{1}{2}x^\top Qx+\bar{\lambda}^\top(b-Ax):x\in X,\|b-Ax\|_\infty\leq\lim_{\rho\to+\infty}\kappa_\rho\} \label{eq:zdg:lsc} \\
\geq&\min_{\|x\|_\infty\leq M}\{c^\top x+\frac{1}{2}x^\top Qx+\bar{\lambda}^\top(b-Ax):x\in X,\|b-Ax\|_\infty=0\} \label{eq:zdg:level} \\
=&\min_{x}\{c^\top x+\frac{1}{2}x^\top Qx:x\in X,Ax=b,\|x\|_\infty\leq M\}\geq z^{\IP}, \nonumber
\end{align}
\end{subequations}
where \cref{eq:zdg:enumeration} follows from the finiteness of $x_2$ under $\|x_2\|_\infty\leq M$, \cref{eq:zdg:lsc} follows from the lower semi-continuity of the continuous quadratic programming \cite[Proposition 6.5.2]{bertsekas2003convex}
and \cref{eq:zdg:level} holds by \cref{assum:penalty}. Note that for any $\rho,\lambda$, $z_\rho^{\LRp}(\lambda)\leq z_\rho^{\LDp}\leq z^{\IP}$, and thus $\lim_{\rho\to+\infty}z_\rho^{\LDp}=z^\IP$.
\end{proof}

Note that by proving the theorem we also show that $\lim_{\rho\to+\infty}z_\rho^{\LRp}(\bar{\lambda})=z^\IP$ from the non-decreasing of $z_\rho^{\LRp}(\bar{\lambda})$ with respect to $\rho$.

We now complete the proof by proving \cref{lm:zdg:boundary}
\begin{proof}[Proof of \cref{lm:zdg:boundary}]
Note that $\|b-Ax\|_\infty\leq\kappa_\rho$ can be written as linear constraints. Hence, apply \cref{lm:Qdecomp} to the feasible region for $\rho=1$ of \cref{eq:zdg:1b} and we get a decomposition $\cup_i (P_i\cap(\R^{n_1}\times \Z^{n_2})+\icone(R_i))$ with the properties listed in the lemma. Note that for all $r\in R_i$ we have $Ar=0$ from the constraints $\|b-Ax\|_\infty\leq\kappa_1$. Therefore, the feasible region for any $\rho\geq 1$ can be written as 
\[
\begin{aligned}
&(\cup_i (P_i\cap(\R^{n_1}\times \Z^{n_2})+\icone(R_i)))\cap\{x:\|b-Ax\|_\infty\leq\kappa_\rho\}\\
=&\cup_i ((P_i\cap(\R^{n_1}\times \Z^{n_2})+\icone(R_i))\cap\{x:\|b-Ax\|_\infty\leq\kappa_\rho\})\\
=&\cup_i\{y=x+\sum \mu_k r_k:x\in P_i\cap(\R^{n_1}\times \Z^{n_2}),r_k\in R_i, \mu_k\in \Z_+,\|b-Ay\|_\infty\leq \kappa_\rho\}\\
=&\cup_i\{y=x+\sum \mu_k r_k:x\in P_i\cap(\R^{n_1}\times \Z^{n_2}),r_k\in R_i, \mu_k\in \Z_+,\|b-Ax\|_\infty\leq \kappa_\rho\}\\
=&\cup_i (P_i\cap(\R^{n_1}\times \Z^{n_2})\cap\{x:\|b-Ax\|_\infty\leq\kappa_\rho\})+\icone(R_i)).
\end{aligned}
\]

Now consider the problem $\inf_x\{c^\top x+\frac{1}{2}x^\top Qx+\bar{\lambda}^\top(b-Ax):x\in P_i\cap(\R^{n_1}\times \Z^{n_2})\cap\{x:\|b-Ax\|_\infty\leq\kappa_\rho\})+\icone(R_i)\}$. If there exists $r\in R_i$ such that $r^\top Q r=0$ (i.e. $Qr=0$), then the feasible region can be rewritten as $P_i\cap(\R^{n_1}\times \Z^{n_2}\cap\{x:\|b-Ax\|_\infty\leq\kappa_\rho\})+\icone(R_i\backslash \{r\})+\{\mu r:\mu\in\Z_+\}$. 

We can use $y+\mu r$ such that $y\in P_i\cap(\R^{n_1}\times \Z^{n_2}\cap\{x:\|b-Ax\|_\infty\leq\kappa_\rho\})+\icone(R_i\backslash \{r\})$ and $\mu\in\Z_+$ to represent $x$. The problem is therefore $\inf_{y,\mu}\{(c^\top-\bar{\lambda}^\top A) \mu+c^\top y+\frac{1}{2}y^\top Qy+\bar{\lambda}^\top(b-Ay):y\in P_i\cap(\R^{n_1}\times \Z^{n_2}\cap\{x:\|b-Ax\|_\infty\leq\kappa_\rho\})+\icone(R_i\backslash \{r\}), \mu\in\Z_+\}$. Optimize the problem over $\mu$ and we get $\mu=0$ an optimal solution (or the problem is unbounded, contrary to \cref{eq:zdg:1b}). Therefore, we can refine the feasible region by omitting all $r\in R_i$ such that $Qr=0$. Denote the set after the process as $R_i^J$. Note that this process is independent of the value of $\rho$, and hence we have 
\[
\begin{aligned}
z^\IP\geq\inf_{x}\ &c^\top x+\frac{1}{2}x^\top Qx+\bar{\lambda}^\top(b-Ax)\ \st\ x\in X,\|b-Ax\|_\infty\leq\kappa_\rho\\
=\inf_{x}\ &c^\top x+\frac{1}{2}x^\top Qx+\bar{\lambda}^\top(b-Ax)\\
\st\ &x\in \cup_i (P_i\cap(\R^{n_1}\times \Z^{n_2})\cap\{x:\|b-Ax\|_\infty\leq\kappa_\rho\})+\icone(R_i^J)).
\end{aligned}
\]

In addition, from (c) of \cref{lm:Qdecomp}, for all $x\in \cone(R_i^J)\backslash\{0\}$, we have $x^\top Q x>0$. Let
\[
V_i=\{x\in (P_i+\cone(R_i^J)):c^\top x+\frac{1}{2}x^\top Qx+\bar{\lambda}^\top(b-Ax)-(z^\IP+1)\leq 0\}.
\]
Note that the definition on $V_i$ is independent of $\rho$. Note that $(P_i\cap(\R^{n_1}\times \Z^{n_2})\cap\{x:\|b-Ax\|_\infty\leq\kappa_\rho\})+\icone(R_i^J))\subseteq (P_i+\cone(R_i^J))$ and we have
\[
\begin{aligned}
z^\IP\geq \inf_{x}\ &c^\top x+\frac{1}{2}x^\top Qx+\bar{\lambda}^\top(b-Ax)\\
\st\ &x\in \cup_i (P_i\cap(\R^{n_1}\times \Z^{n_2})\cap\{x:\|b-Ax\|_\infty\leq\kappa_\rho\})+\icone(R_i^J))\\
=\inf_{x}\ &c^\top x+\frac{1}{2}x^\top Qx+\bar{\lambda}^\top(b-Ax)\\
\st\ &x\in \cup_i (P_i\cap(\R^{n_1}\times \Z^{n_2})\cap\{x:\|b-Ax\|_\infty\leq\kappa_\rho\})+\icone(R_i^J))\\
&c^\top x+\frac{1}{2}x^\top Qx+\bar{\lambda}^\top(b-Ax)\leq z^\IP+1\\
\geq \inf_{x}\ &c^\top x+\frac{1}{2}x^\top Qx+\bar{\lambda}^\top(b-Ax)\\
\st\ &x\in \cup_i (P_i\cap(\R^{n_1}\times \Z^{n_2})\cap\{x:\|b-Ax\|_\infty\leq\kappa_\rho\})+\icone(R_i^J))\\
&x\in \cup_i V_i.\\
\end{aligned}
\]

Using \cref{lm:boundedref} we have that there exists $M_i>0$ such that $V_i\in\{x:\|x\|_\infty\leq M_i\}$. Take $M=\max \{M_i\}$, which is independent of $\rho$, and we have
\[
\begin{aligned}
\inf_{x}\ &c^\top x+\frac{1}{2}x^\top Qx+\bar{\lambda}^\top(b-Ax)\\
\st\ &x\in \cup_i (P_i\cap(\R^{n_1}\times \Z^{n_2})\cap\{x:\|b-Ax\|_\infty\leq\kappa_\rho\})+\icone(R_i^J))\\
&x\in \cup_i V_i\\
\geq \inf_{x}\ &c^\top x+\frac{1}{2}x^\top Qx+\bar{\lambda}^\top(b-Ax)\\
\st\ &x\in \cup_i (P_i\cap(\R^{n_1}\times \Z^{n_2})\cap\{x:\|b-Ax\|_\infty\leq\kappa_\rho\})+\icone(R_i^J))\\
&\|x\|_\infty\leq M\\
\geq\inf_{x}\ &c^\top x+\frac{1}{2}x^\top Qx+\bar{\lambda}^\top(b-Ax)\ \st\ x\in X, \|b-Ax\|_\infty\leq \kappa_\rho,\|x\|_\infty\leq M.
\end{aligned}
\]

While $\inf_{x}\{c^\top x+\frac{1}{2}x^\top Qx+\bar{\lambda}^\top(b-Ax)\ \st\ x\in X, \|b-Ax\|_\infty\leq \kappa_\rho,\|x\|_\infty\leq M\}\geq z^\NLP$ is bounded and the values of $x_2$ here is finite, we can therefore replace $\inf$ by $\min$.

Therefore,
\[
\begin{aligned}
&\inf_{x}\{c^\top x+\frac{1}{2}x^\top Qx+\bar{\lambda}^\top(b-Ax):x\in X,\|b-Ax\|_\infty\leq\kappa_\rho\}\\
\geq&\min_{x}\{c^\top x+\frac{1}{2}x^\top Qx+\bar{\lambda}^\top(b-Ax):x\in X,\|b-Ax\|_\infty\leq\kappa_\rho,\|x\|_\infty\leq M\}.
\end{aligned}
\]

Since it is obvious that 
\[
\begin{aligned}
&\inf_{x}\{c^\top x+\frac{1}{2}x^\top Qx+\bar{\lambda}^\top(b-Ax):x\in X,\|b-Ax\|_\infty\leq\kappa_\rho\}\\
\leq&\min_{x}\{c^\top x+\frac{1}{2}x^\top Qx+\bar{\lambda}^\top(b-Ax):x\in X,\|b-Ax\|_\infty\leq\kappa_\rho,\|x\|_\infty\leq M\},
\end{aligned}
\]
thus equality holds and the proof is completed.
\end{proof}
\section{Exact Penalty Representation}
\label{sec:exactpenalty}
In this section, we will discuss conditions for an exact penalty representation. To begin with, a sufficient condition is given. We later prove the sufficiency of using norm as the penalty function for an exact penalty, while noting that a norm function always satisfies \cref{assum:penalty}.

\begin{theorem}[A Sufficient Condition for Exact Penalty]\label{thm:fixeald_ge}
Under \cref{ass:problem} (MIQP is feasible and the optimal value is bounded), if there exists $\delta$, such that
\[\inf\{\psi(b-Ax):x\in X, Ax\neq b\}\geq\delta>0\]
and $\psi(0)=0$, then there exists a finite $\rho^*$ such that $z^{\LRp}_{\rho^*}(\bar{\lambda})=z^\IP$, which also gives $z^\LDp_{\rho^*}=z^\IP$.
\end{theorem}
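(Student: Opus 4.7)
The plan is to exploit the strictly positive gap $\delta$ to force $Ax=b$ at optimality once $\rho$ is chosen large enough. First I would split the feasible region of the augmented Lagrangian relaxation into $X_{0} := X \cap \{x : Ax = b\}$ and $X_{1} := X \cap \{x : Ax \neq b\}$, and handle the two pieces separately, using that $X = X_{0} \cup X_{1}$ is a disjoint union so that $\inf_{X}(\cdot) = \min\{\inf_{X_{0}}(\cdot),\inf_{X_{1}}(\cdot)\}$.

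On $X_{0}$ we have $b - Ax = 0$, so both the linear dual term and the penalty $\rho\psi(b-Ax)$ vanish (using $\psi(0) = 0$), and the augmented Lagrangian reduces to $c^{\top}x + \tfrac{1}{2}x^{\top}Qx$. The infimum of this over $X_{0}$ is exactly $z^{\IP}$ by definition of the MIQP. On $X_{1}$ the hypothesis delivers $\psi(b-Ax) \geq \delta > 0$, and the standard weak-duality bound
\[c^{\top}x + \tfrac{1}{2}x^{\top}Qx + \bar\lambda^{\top}(b-Ax) \geq z^{\NLP},\]
valid for every $x$ satisfying $Ex \leq f$ (obtained by tacking on the $\bar\lambda_{E}$ term exactly as in the proof of \cref{prop:penaltylimit}), then yields the lower bound $z^{\NLP} + \rho\delta$ on the $X_{1}$ piece.

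Choosing any finite $\rho^{*} \geq (z^{\IP} - z^{\NLP})/\delta$ makes the $X_{1}$ piece at least $z^{\IP}$, so that $z^{\LRp}_{\rho^{*}}(\bar\lambda) \geq z^{\IP}$. Combined with the trivial upper bound $z^{\LRp}_{\rho^{*}}(\bar\lambda) \leq z^{\LDp}_{\rho^{*}} \leq z^{\IP}$ noted in the second remark of \cref{sec:mainresults}, equality holds throughout and in particular $z^{\LDp}_{\rho^{*}} = z^{\IP}$. This $\rho^{*}$ is finite because \cref{lm:NLPQPIP} ensures that $z^{\NLP}$ is finite under \cref{ass:problem}, hence so is $z^{\IP} - z^{\NLP}$.

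I do not anticipate any real obstacle. The only point to verify carefully is the NLP weak-duality bound applied on $X_{1}$, which is essentially the same lower bound already used in the proof of \cref{prop:penaltylimit} and follows directly from the existence of the optimal multipliers $\bar\lambda,\bar\lambda_{E}$ for the continuous relaxation. The argument is the classical ``large enough penalty'' trick; what makes it work here is precisely that $\psi$ is bounded away from zero by $\delta$ on the infeasible set $\{Ax \neq b\}$, which converts the penalty term into a uniform additive jump of size $\rho\delta$.
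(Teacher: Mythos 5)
Your proposal is correct and follows essentially the same route as the paper's proof: split $X$ into the feasible and infeasible parts with respect to $Ax=b$, bound the infeasible part below by $z^{\NLP}+\rho\delta$ via the multipliers $\bar\lambda,\bar\lambda_E$ of the continuous relaxation, and take $\rho^*$ large enough that this exceeds $z^{\IP}$. The only cosmetic difference is that the paper sets $\rho^*=\frac{1}{\delta}(c^\top\tilde{x}+\frac{1}{2}\tilde{x}^\top Q\tilde{x}-z^{\NLP})$ for an explicit feasible point $\tilde{x}$ rather than your $(z^{\IP}-z^{\NLP})/\delta$, which changes nothing in the argument.
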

\begin{proof}
Under \cref{ass:problem}, using \cref{lm:NLPQPIP}, we have $z^\NLP$ is bounded. Thus, choose a feasible point $\tilde{x}$ for the MIQP and set $$\rho^*=\frac{1}{\delta}(c^\top \tilde{x}+\frac{1}{2}\tilde{x}^\top Q \tilde{x}-z^\NLP)<\infty,$$
we next show that $\rho^*$ satisfies our requirements.

First of all, as $z^\NLP$ bounded and $c^\top \tilde{x}+\frac{1}{2}\tilde{x}^\top Q \tilde{x}\geq z^\IP$, we have $\rho^*\in[0,+\infty)$. Clearly $z^{\LRp}_{\rho^*}(\bar{\lambda})\leq z^\IP$. We next show that $z^{\LRp}_{\rho^*}(\bar{\lambda})\geq z^\IP$.

For $x\in X$ with $Ax=b$, we have $$c^\top x+\frac{1}{2}x^\top Qx+\bar{\lambda}^\top(b-Ax)+\rho^*\psi(b-Ax)=c^\top x+\frac{1}{2}x^\top Qx\geq z^\IP.$$

For $x\in X$ with $Ax\neq b$, we have $$c^\top x+\frac{1}{2}x^\top Qx+\bar{\lambda}^\top(b-Ax)\geq z^\NLP$$
from the strong duality results for QP. Thus,
$$c^\top x+\frac{1}{2}x^\top Qx+\bar{\lambda}^\top(b-Ax)+\rho^*\psi(b-Ax)\geq z^\NLP+\rho^*\delta = c^\top \tilde{x}+\frac{1}{2}\tilde{x}^\top Q \tilde{x}\geq z^\IP.$$

Therefore, we have for $x\in X$, $c^\top x+\frac{1}{2}x^\top Qx+\bar{\lambda}^\top(b-Ax)+\rho^*\psi(b-Ax)\geq z^\IP$ and thus $z^{\LRp}_{\rho^*}(\bar{\lambda})=z^\IP$.
\end{proof}

We now present the exact penalty results for $\psi(\cdot)=\|\cdot\|_\infty$.
\begin{theorem}[Exact Penalty Representation for $L^\infty$-Norm]\label{thm:fixeald}
Assuming $\psi(\cdot)=\|\cdot\|_\infty$, there exists a finite $\rho^*(\bar{\lambda})$ of small complexity, such that $z_{\rho^*}^{\LRp}(\bar{\lambda})=z^{\IP}$.
\end{theorem}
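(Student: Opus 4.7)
The plan combines the three results flagged in the flowchart preceding \cref{thm:flexeald}: the asymptotic zero duality gap (\cref{thm:zdg}), a uniform bound $\|x\|_\infty\le M$ on the optimizer of $z_\rho^{\LRp}(\bar\lambda)$ independent of $\rho$ (proved in the spirit of \cref{lm:zdg:boundary} via \cref{lm:Qdecomp} and \cref{lm:boundedref}), and the sufficient condition for exact penalty (\cref{thm:fixeald_ge}). Since $\|\cdot\|_\infty$ satisfies \cref{assum:penalty}, \cref{thm:zdg} already gives $z_\rho^{\LRp}(\bar{\lambda})\to z^{\IP}$ as $\rho\to\infty$; the task is to upgrade this asymptotic convergence to an exact equality at a finite $\rho^*$ of small complexity.

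The first step is to prove the uniform bound: there exists a small-complexity $M$ (independent of $\rho$, for all $\rho$ above a small-complexity threshold $\rho_0$) such that the minimization defining $z_\rho^{\LRp}(\bar\lambda)$ can be restricted to $\{x:\|x\|_\infty\le M\}$ without changing its value. The argument follows the strategy of \cref{lm:zdg:boundary}: decompose $X$ via \cref{lm:Qdecomp}; for any ray $r$ with $Ar\ne 0$ the penalty $\rho\|b-Ax-\mu Ar\|_\infty$ dominates as $\mu\to\infty$ for $\rho$ large, so such rays contribute only a bounded amount to $x$; rays with $Qr=0$ and $Ar=0$ are trimmed exactly as in \cref{lm:zdg:boundary}; and what remains is a cone on which $x^\top Qx>0$ for all nonzero $x$, so \cref{lm:boundedref} yields the small-complexity box.

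Inside this box, $x_2$ takes only finitely many integer values, which I would split into \emph{feasible} $x_2$ (for which some $x_1$ satisfies $(x_1,x_2)\in X$ and $Ax=b$) and \emph{infeasible} $x_2$ (for which no such $x_1$ exists). For each infeasible $x_2$, the polyhedron $\{b-Ax:(x_1,x_2)\in X\}$ is closed and avoids the origin, so its $L^\infty$-distance $\delta(x_2)>0$ is an LP value with small-complexity data. With $\delta:=\min_{x_2\text{ infeasible}}\delta(x_2)$, the argument of \cref{thm:fixeald_ge} shows that any $\rho\ge(c^\top\tilde x+\tfrac{1}{2}\tilde x^\top Q\tilde x-z^{\NLP})/\delta$ rules out $x$ with infeasible $x_2$ as optimizers, where $\tilde x$ is any small-complexity MIQP-feasible point. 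For each feasible $x_2$, the slice is a continuous convex QP, and the classical $L^\infty$-exact-penalty result for convex QP provides a small-complexity threshold $\rho^{(x_2)}$ (depending on the QP dual multipliers of the slice and $\|\bar\lambda\|_1$) above which the penalized slice value equals the unpenalized value, itself at least $z^{\IP}$. Setting $\rho^*$ to the maximum of $\rho_0$, the infeasible-slice threshold, and $\max_{x_2\text{ feasible}}\rho^{(x_2)}$ gives $z_{\rho^*}^{\LRp}(\bar\lambda)\ge z^{\IP}$, and the reverse inequality is automatic.

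The main obstacle is the small-complexity bookkeeping: although $\log M$ is polynomial in $|\F|$, the box can contain exponentially many integer $x_2$, so direct enumeration is not polynomial-time. The resolution is that $\delta$ and $\max_{x_2\text{ feasible}}\rho^{(x_2)}$ are each attained at a \emph{specific} integer $x_2$ of binary encoding size $O(n_2\log M)$, which is of small complexity, and the corresponding LP/QP value then inherits small complexity from its small-complexity data by the rational LP/QP complexity bounds underlying \cref{lm:boundedref}.
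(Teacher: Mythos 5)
Your architecture is sound and the theorem does follow from it, but the core of your argument is genuinely different from the paper's. Both proofs share the first two moves: restrict to $\|x\|_\infty\le M$ for a small-complexity $M$ independent of $\rho$ (the paper's \cref{lm:eald:boundary}), and then exploit the finiteness of the integer part $x_2$ inside the box. The divergence is in how a finite threshold is extracted per slice. The paper takes the convex-QP dual $z^{\text{DRD+}}_\rho(\bar\lambda,x_2)$ of each slice, uses \cref{thm:zdg} to get $\lim_{\rho\to\infty}z^{\text{DRD+}}_\rho(\bar\lambda,x_2)\ge z^{\IP}$, and then solves an auxiliary QCQP over the dual variables to exhibit a feasible dual point of value at least $z^{\IP}$ at a finite $\rho^*(x_2)$ of small complexity. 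You instead split the slices into infeasible ones (no $x_1$ with $Ax=b$), which you eliminate via a positive $L^\infty$-distance $\delta>0$ exactly in the manner of \cref{thm:fixeald_ge}, and feasible ones, which you handle by the classical exact-penalty theorem for convex programs with threshold given by the $\ell_1$-norm of the slice's equality multipliers. Your route does not actually use \cref{thm:zdg} at all and avoids the QCQP existence argument, so it is more elementary and self-contained; the paper's route avoids importing an external exact-penalty theorem and treats all slices uniformly without a feasibility dichotomy. Your resolution of the complexity bookkeeping (the extremizing $x_2$ has encoding size $O(n_2\log M)$, hence the attained LP/QP value inherits small complexity) is precisely the device the paper relies on when it sets $\rho^*=\max_{x_2\in V}\rho^*(x_2)$.

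One step needs tightening: in your version of the uniform-bound lemma you dispose of recession rays $r$ with $Qr=0$ but $Ar\ne 0$ by asserting that the penalty ``dominates.'' Along such a ray the penalized objective is convex but not monotone in the step length $\mu$, so you cannot conclude that $\mu=0$ is optimal, and bounding the optimal $\mu$ would require controlling $\|b-Ax\|_\infty$ at the base point. The paper sidesteps this entirely by first appending the valid constraint $w\le w_M:=2(z^{\IP}-z^{\NLP})$ (justified by \cref{eq:wrhostar}) to the $(x,w)$-formulation before applying \cref{lm:Qdecomp}; this forces every recession ray to have $w$-component zero and hence $Ar=0$, after which only the $Qr=0$ trimming and \cref{lm:boundedref} are needed. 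You should adopt that device rather than a domination argument.
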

\begin{proof}
It is sufficient to find a finite $\rho^*(\bar{\lambda})$ polynomially bounded, such that $z_{\rho^*}^{\LRp}(\bar{\lambda})\geq z^{\IP}$. Since $z^{\LRp}_\rho(\bar{\lambda})$ is non-decreasing with $\rho$ increasing, without loss of generality, we only consider $\rho\geq 1$. In addition, from \cite[Theorem 4]{del2017mixed}, $z^\IP$, $z^\NLP$ and $\bar{\lambda}$ have $\mathcal{F}$-small complexity.

The constraints $\|b-Ax\|_\infty\leq w$ can be written as $-\mathbf{1}w\leq b-Ax\leq \mathbf{1}w$. Therefore,
\begin{equation}\label{eq:LRp:origin}
\begin{aligned}
z_\rho^{\LRp}(\bar{\lambda})=\inf_{x,w}\ &(c^\top -\bar{\lambda}^\top A)x+\frac{1}{2}x^\top Qx+\rho w+\bar{\lambda}^\top b\\
\st\ &Ax-\mathbf{1} w\leq b,\\
&-Ax-\mathbf{1} w\leq b,\\
&Ex\leq f,\\
&x\in\R^{n_1}\times \Z^{n_2}.
\end{aligned}
\end{equation}

The following lemma shows a uniform bound $M$ of small complexity can be put on $x$ independent of $\rho$.
\begin{lemma}[A Uniform Bound on $x$ Independent of $\rho$]\label{lm:eald:boundary}
Under the assumption that $\rho\geq 1$, there exists $M>0$ independent of $\rho$ and of small complexity, such that
\begin{equation}\label{eq:LRp:bounded}
\begin{aligned}
z_\rho^{\LRp}(\bar{\lambda})=\inf_{x,w}\ &(c^\top -\bar{\lambda}^\top A)x+\frac{1}{2}x^\top Qx+\rho w+\bar{\lambda}^\top b\\
\st\ &Ax-\mathbf{1} w\leq b,\\
&-Ax-\mathbf{1} w\leq b,\\
&Ex\leq f,\\
&\|x\|_\infty\leq M,\\
&x\in\R^{n_1}\times \Z^{n_2}.
\end{aligned}
\end{equation}
\end{lemma}
A proof of \cref{lm:eald:boundary} is provided later. We next rewrite $x=(x_1,x_2)$ and separate $A,E,c$ respectively. We also rewrite 
\[Q=\begin{bmatrix}Q^{(11)}&Q^{(12)}\\Q^{(21)}&Q^{(22)}\end{bmatrix}.\]
Note that $Q^{(11)}$ is also positive semi-definite. Therefore, the problem can be rewrite as
\begin{equation}\label{eq:LRp:separatebounded}
\begin{aligned}
z_\rho^{\LRp}(\bar{\lambda})=\inf_{x_1,x_2,w}\ &(c_1^\top -\bar{\lambda}^\top A_1+x_2^\top Q^{(21)})x_1+\frac{1}{2}x_1^\top Q^{(11)}x_1\\
&+\rho w+\bar{\lambda}^\top b+(c_2^\top-\bar{\lambda}^\top A_2)x_2+\frac{1}{2}x_2^\top Q^{(22)}x_2\\
\st\ &A_1x_1-\mathbf{1} w\leq -A_2x_2 + b,\\
&-A_1x_1-\mathbf{1} w\leq A_2x_2 -b,\\
&E_1x_1\leq f-E_2x_2,\\
&x_1\leq\mathbf{1}M,-x_1\leq \mathbf{1}M,\\
&\|x_2\|_\infty\leq M,\\
&x_1\in\R^{n_1},x_2\in\Z^{n_2}\\
\end{aligned}
\end{equation}
Denote $V=\{v\in\Z^{n_2}: \|v\|_\infty\leq M\}$. In addition, we use $z_\rho^{\LRp}(\bar{\lambda},x_2)$ to denote $z_\rho^{\LRp}(\bar{\lambda})$ while fixing $x_2$. Therefore, $z_\rho^{\LRp}(\bar{\lambda})=\min_{x_2\in V}z_\rho^{\LRp}(\bar{\lambda},x_2)$. Note that $z_\rho^{\LRp}(\bar{\lambda},x_2)$ is still non-decreasing with respect to $\rho$. Therefore, from \cref{thm:zdg} we have $z^\IP=\lim_{\rho\to+\infty}\min_{x_2\in V}z_\rho^{\LRp}(\bar{\lambda},x_2)=\min_{x_2\in V}\lim_{\rho\to+\infty}z_\rho^{\LRp}(\bar{\lambda},x_2)$, or equivalently we have that $\lim_{\rho\to+\infty}z_\rho^{\LRp}(\bar{\lambda},x_2)\geq z^\IP$.

For arbitrary $x_2\in V$, the dual problem of \cref{eq:LRp:separatebounded} (with respect to $x_1,w$) is therefore
\[
\begin{aligned}
z^{\text{DRD+}}_\rho(\bar{\lambda},x_2):&=\sup_{y_1,y_2,y_3,y_4,y_5\geq 0}\inf_{x_1,w}(c_1^\top -\bar{\lambda}^\top A_1+x_2^\top Q^{(21)})x_1+\frac{1}{2}x_1^\top Q^{(11)}x_1\\
&+\rho w+\bar{\lambda}^\top b+(c_2^\top-\bar{\lambda}^\top A_2)x_2+\frac{1}{2}x_2^\top Q^{(22)}x_2\\
&+y_1^\top(A_1x_1-\mathbf{1} w+A_2x_2-b)-y_2^\top(A_1x_1+\mathbf{1} w+A_2x_2-b)\\
&+y_3^\top(E_1x_1-f+E_2x_2)+y_4^\top(x_1-\mathbf{1}M)+y_5^\top(-x_1-\mathbf{1}M).
\end{aligned}
\]
Note that the problem $\inf_{x_1,w}$ is bounded if and only if $(y_1+y_2)^\top \mathbf{1}=\rho$ and there exists $ \nu$, s.t. $c_1^\top -\bar{\lambda}^\top A_1+x_2^\top Q^{(21)}+(y_1-y_2)^\top A_1+y_3^\top E_1+(y_4-y_5)^\top=\nu^\top Q^{(11)}$. Therefore the problem is
\[
\begin{aligned}
z^{\text{DRD+}}_\rho(\bar{\lambda},x_2)=\sup_{y,\nu}\ &- \frac{1}{2}\nu^\top Q^{(11)}\nu+(A_2x_2-b)^\top y_1-(A_2x_2-b)^\top y_2\\
&+(E_2x_2-f)^\top y_3-M\mathbf{1}^\top(y_4+y_5)\\
&+\bar{\lambda}^\top b+(c_2^\top-\bar{\lambda}^\top A_2)x_2+\frac{1}{2}x_2^\top Q^{(22)}x_2\\
\st\ &y_1,y_2,y_3,y_4,y_5\geq 0,\\
& \mathbf{1}^\top (y_1+y_2)=\rho,\\
& c- A_1^\top \bar{\lambda}+Q^{(12)} x_2+A_1^\top (y_1-y_2)+E_1^\top y_3 +y_4-y_5=Q^{(11)\top} \nu.
\end{aligned}
\]

By strong duality we have $z_\rho^{\LRp}(\bar{\lambda},x_2)=z^{\text{DRD+}}_\rho(\bar{\lambda},x_2)$. Therefore, we have $\lim_{\rho\to+\infty}z_\rho^{\text{DRD+}}(\bar{\lambda},x_2)=\lim_{\rho\to+\infty}z_\rho^{\LRp}(\bar{\lambda},x_2)\geq z^\IP$. Now consider the following problem with respect to $(\xi,y,\nu,\rho)$:
\[
\begin{aligned}
\min\ &\xi\\
\st\ &- \frac{1}{2}\nu^\top Q^{(11)}\nu+(A_2x_2-b)^\top y_1-(A_2x_2-b)^\top y_2\\
&+(E_2x_2-f)^\top y_3-M\mathbf{1}^\top(y_4+y_5)\\
&+\bar{\lambda}^\top b+(c_2^\top-\bar{\lambda}^\top A_2)x_2+\frac{1}{2}x_2^\top Q^{(22)}x_2+\xi\geq z^\IP,\\
&y_1,y_2,y_3,y_4,y_5,\xi\geq 0,\ \rho\geq 1,\\
& \mathbf{1}^\top (y_1+y_2)=\rho,\\
& c- A_1^\top \bar{\lambda}+Q^{(12)} x_2+A_1^\top (y_1-y_2)+E_1^\top y_3 +y_4-y_5=Q^{(11)\top} \nu.
\end{aligned}
\]
The above problem is a quadratically constrained quadratic programming (QCQP) with convex constraints and affine objective function. The existence of an optimal solution is guaranteed by the finiteness and feasibility of the problem \cite{bertsekas2003convex}. In addition, as $\lim_{\rho\to+\infty}z_\rho^{\text{DRD+}}(\bar{\lambda},x_2)\geq z^\IP$ there exists a sequence of $(\xi^k,y^k,\nu^k,\rho^k)$ feasible to the problem such that $\xi^k\to 0$. Therefore, the optimal value is $0$ and an optimal solution $(0,y^*,\nu^*,\rho^*)$ exists, which guarantees the feasibility of the following problem with respect to $(y,\nu,\rho)$:
\[
\begin{aligned}
\rho^*(x_2):=\min\ &\rho\\
\st\ &- \frac{1}{2}\nu^\top Q^{(11)}\nu+(A_2x_2-b)^\top y_1-(A_2x_2-b)^\top y_2\\
&+(E_2x_2-f)^\top y_3-M\mathbf{1}^\top(y_4+y_5)\\
&+\bar{\lambda}^\top b+(c_2^\top-\bar{\lambda}^\top A_2)x_2+\frac{1}{2}x_2^\top Q^{(22)}x_2\geq z^\IP,\\
&y_1,y_2,y_3,y_4,y_5\geq 0,\ \rho\geq 1,\\
& \mathbf{1}^\top (y_1+y_2)=\rho,\\
& c- A_1^\top \bar{\lambda}+Q^{(12)} x_2+A_1^\top (y_1-y_2)+E_1^\top y_3 +y_4-y_5=Q^{(11)\top} \nu.
\end{aligned}
\]
Similarly, the finiteness and feasibility of the problem guarantees the existence of the optimal solution. Therefore, $\rho^*(x_2)$ is well defined and from \cite[Theorem 4]{del2017mixed}, $\rho^*(x_2)$ has small complexity. In addition, we have $z^{\text{DRD+}}_{\rho^*(x_2)}(\bar{\lambda},x_2)\geq z^\IP$.

Now let $\rho^*=\max_{x_2\in V}\rho^*(x_2)$ of small complexity, we have $z^{\text{DRD+}}_{\rho^*} (\bar{\lambda},x_2)\geq z^{\text{DRD+}}_{\rho^*(x_2)}(\bar{\lambda},x_2)\geq z^\IP$ for all $x_2\in V$. Hence, $z^{\LRp}_{\rho^*} (\bar{\lambda})=\min_{x_2\in V}z^{\LRp}_{\rho^*} (\bar{\lambda},x_2)=\min_{x_2\in V}z^{\text{DRD+}}_{\rho^*} (\bar{\lambda},x_2)\geq z^\IP$.

Note that for any $\rho,\lambda$, $z_\rho^{\LRp}(\lambda)\leq z_\rho^{\LDp}\leq z^{\IP}$, and thus $z_{\rho^*}^\LRp(\bar{\lambda})=z_{\rho^*}^{\LDp}=z^\IP$.
\end{proof}

We next complete the proof by proving \cref{lm:eald:boundary}.
\begin{proof}[Proof of \cref{lm:eald:boundary}]
Consider for $\rho\geq 1$, $w_M:=2(z^\IP-z^\NLP) \geq 2w^*_\rho$, where the inequality follows \cref{eq:wrhostar}, and define
\[
\begin{aligned}
\hat{z}_\rho^{\LRp}(\bar{\lambda})=\inf_{x,w}\ &(c^\top -\bar{\lambda}^\top A)x+\frac{1}{2}x^\top Qx+\rho w+\bar{\lambda}^\top b\\
\st\ &Ax-\mathbf{1} w\leq b,\\
&-Ax-\mathbf{1} w\leq -b,\\
&Ex\leq f,\\
&w\leq w_M,\\
&x\in\R^{n_1}\times \Z^{n_2}.
\end{aligned}\]
Denote the feasible region for this problem as $P$ and the feasible region for the original problem \cref{eq:LRp:origin} as $P^o$. Clearly, $P\subseteq P^o$ and thus $\hat{z}_\rho^{\LRp}(\bar{\lambda})\geq z_\rho^{\LRp}(\bar{\lambda})$. Similarly as $P$ is larger than the feasible region of \cref{eq:penaltyrelax:tildez}, we have $\hat{z}_\rho^{\LRp}(\bar{\lambda})\leq \tilde{z}_\rho^{\LRp}(\bar{\lambda})=z_\rho^{\LRp}(\bar{\lambda})$ and thus $\hat{z}_\rho^{\LRp}(\bar{\lambda}) = z_\rho^{\LRp}(\bar{\lambda})$.

Now apply \cref{lm:Qdecomp} to $P$ and we get a decomposition $P=\cup_i (P_i\cap(\R^{n_1}\times \Z^{n_2}\times \R)+\icone(R_i))$ with the properties listed in the lemma. Note that the decomposition applies to all $\rho$. 

Note that the problem is bounded from the boundedness of $z_\rho^{\LRp}$ and for all $r\in R_i$ the $w$-component is $0$ (from the constraints $w\leq w_M$, so we can omit the $w$-component for any vector in $R_i$, or simply denote it as $R_i\times \{0\}$.

Similar to the proof to \cref{lm:zdg:boundary}, when we solve the problem $\inf_{x,w}\{(c^\top -\bar{\lambda}^\top A)x+\frac{1}{2}x^\top Qx+\rho w+\bar{\lambda}^\top b:(x,w)\in P_i\cap(\R^{n_1}\times \Z^{n_2}\times \R)+\icone(R_i)\times \{0\}\}$, if there exists $r\in R_i$ such that $r^\top Q r=0$ (i.e. $Qr=0$), the feasible region can be decomposed as as $P_i\cap(\R^{n_1}\times \Z^{n_2}\times \R)+\icone(R_i\backslash \{r\})\times\{0\}+\{\mu r:\mu\in\Z_+\}\times \{0\}$. Optimize the problem over $\mu$ and we get $\mu=0$ an optimal solution (otherwise the problem will be unbounded). Therefore, we can refine the feasible region by omitting all $r\in R_i$ such that $Qr=0$. Denote the set after the process as $R_i^J$. Note that this process is independent of $\rho$, and hence we have 
\[
\begin{aligned}
z_\rho^{\LRp}(\bar{\lambda})=\inf_{x,w}\ &(c^\top -\bar{\lambda}^\top A)x+\frac{1}{2}x^\top Qx+\rho w+\bar{\lambda}^\top b\\
\st\ &(x,w)\in \cup_i(P_i\cap(\R^{n_1}\times \Z^{n_2}\times \R)+\icone(R_i^J)\times\{0\}).
\end{aligned}\]
Now, from (c) of \cref{lm:Qdecomp}, for all $x\in \cone(R_i^J)\backslash\{0\}$, we have $x^\top Q x>0$.
Let
\[
\begin{aligned}
V_i=\{&(x,w)\in (P_i+\cone(R^J_i)\times\{0\}):\\
&(c^\top -\bar{\lambda}^\top A)x+\frac{1}{2}x^\top Qx+w+\bar{\lambda}^\top b\leq z^\IP+1\}.
\end{aligned}
\]
Note that the definition on $V_i$ is independent of $\rho$. Therefore, as $\rho\geq 1$ we have
\[\begin{aligned}
z_\rho^{\LRp}(\bar{\lambda})=\inf_{x,w}\ &(c^\top -\bar{\lambda}^\top A)x+\frac{1}{2}x^\top Qx+\rho w+\bar{\lambda}^\top b\\
\st\ &(x,w)\in \cup_i(P_i\cap(\R^{n_1}\times \Z^{n_2}\times \R)+\icone(R_i^J)\times\{0\})\\
=\inf_{x,w}\ &(c^\top -\bar{\lambda}^\top A)x+\frac{1}{2}x^\top Qx+\rho w+\bar{\lambda}^\top b\\
\st\ &(x,w)\in \cup_i(P_i\cap(\R^{n_1}\times \Z^{n_2}\times \R)+\icone(R_i^J)\times\{0\})\\
&(c^\top -\bar{\lambda}^\top A)x+\frac{1}{2}x^\top Qx+w+\bar{\lambda}^\top b\leq z^\IP+1\\
\geq \inf_{x,w}\ &(c^\top -\bar{\lambda}^\top A)x+\frac{1}{2}x^\top Qx+\rho w+\bar{\lambda}^\top b\\
\st\ &(x,w)\in \cup_i(P_i\cap(\R^{n_1}\times \Z^{n_2}\times \R)+\icone(R_i^J)\times\{0\})\\
&(x,w)\in\cup_i V_i.
\end{aligned}\]

From \cref{lm:boundedref}, there exists $ M_i>0$, such that $V_i\subseteq \{(x,w):\|(x,w)\|_\infty\leq M_i\}$ and  $M_i$ has small complexity. Therefore, $V_i \subset \{(x,w):\|x\|_\infty\leq M_i\}$. Let $M=\max\{M_i\}$ (which is again independent of $\rho$ and has small complexity) and we have
\[
\begin{aligned}
z_\rho^{\LRp}(\bar{\lambda})\geq \inf_{x,w}\ &(c^\top -\bar{\lambda}^\top A)x+\frac{1}{2}x^\top Qx+\rho w+\bar{\lambda}^\top b\\
\st\ &(x,w)\in \cup_i(P_i\cap(\R^{n_1}\times \Z^{n_2}\times \R)+\icone(R_i^J)\times\{0\})\\
&(x,w)\in\cup_i V_i\\
\geq \inf_{x,w}\ &(c^\top -\bar{\lambda}^\top A)x+\frac{1}{2}x^\top Qx+\rho w+\bar{\lambda}^\top b\\
\st\ &(x,w)\in \cup_i(P_i\cap(\R^{n_1}\times \Z^{n_2}\times \R)+\icone(R_i^J)\times\{0\})\\
&\|x\|_\infty\leq M\\
\geq \inf_{x,w}\ &(c^\top -\bar{\lambda}^\top A)x+\frac{1}{2}x^\top Qx+\rho w+\bar{\lambda}^\top b\\
\st\ &(x,w)\in P^o,\ \|x\|_\infty\leq M.
\end{aligned}\]

Since
\[
\begin{aligned}
z_\rho^{\LRp}(\bar{\lambda})=\inf_{x,w}\ &(c^\top -\bar{\lambda}^\top A)x+\frac{1}{2}x^\top Qx+\rho w+\bar{\lambda}^\top b\\
\st\ &(x,w)\in P^o\\
\leq \inf_{x,w}\ &(c^\top -\bar{\lambda}^\top A)x+\frac{1}{2}x^\top Qx+\rho w+\bar{\lambda}^\top b\\
\st\ &(x,w)\in P^o,\ \|x\|_\infty\leq M,
\end{aligned}
\]
equality holds and the proof is completed.
\end{proof}

Next, we will generalize the result to any norm penalty and any dual variable.
\EPR*
\begin{proof}
Denote the $\rho^*(\bar{\lambda})$ in \cref{thm:fixeald} as $\hat{\rho}$ to avoid confusion.

As $\psi(\cdot)$ is a norm function, there exists $ \gamma\in[1,+\infty)$ such that $\gamma\|\cdot\|_\infty\geq \psi(\cdot)\geq\|\cdot\|_\infty/\gamma$. Without loss of generality, we round up $\gamma$ to a closest integer, which is still a constant decided only by $\|\cdot\|_\infty$ and $\psi(\cdot)$. Therefore, by letting $\rho^*(\bar{\lambda})=\gamma\hat{\rho}$, which still has small complexity, we have 
\[
\begin{aligned}
z^\LRp_{\rho^*(\bar{\lambda})}(\bar{\lambda})=&\inf_{x\in X}\{c^\top x+\frac{1}{2}x^\top Q x+\bar{\lambda}^\top(b-Ax)+\rho^*(\bar{\lambda})\psi(b-Ax)\}\\
\geq&\inf_{x\in X}\{c^\top x+\frac{1}{2}x^\top Q x+\bar{\lambda}^\top(b-Ax)+\hat{\rho}\|b-Ax\|_\infty\}\\
=&z^\IP,
\end{aligned}
\]
where the last equation comes from \cref{thm:fixeald}. Along with $z_\rho^{\LRp}(\lambda)\leq z_\rho^{\LDp}\leq z^\IP$, we have $z^\LRp_{\rho^*(\bar{\lambda})}(\bar{\lambda})=z^\LDp_{\rho^*(\bar{\lambda})}=z^\IP$ and (a) is proven. Now it only remains to show that we can replace $\bar{\lambda}$ by any dual vector $\tilde{\lambda}\in\R^m$.

From Cauchy-Schwarz inequality, we have
$$-\|\lambda\|_2\|b-Ax\|_2\leq\lambda^\top(b-Ax)\leq\|\lambda\|_2\|b-Ax\|_2.$$

Again, applying the property of the norm, there exists $ \eta\in[1,+\infty)\cap\Z_+$ decided only by $\|\cdot\|_2$ and $\psi(\cdot)$, such that $\eta\|\cdot\|_2\geq \psi(\cdot)\geq\|\cdot\|_2/\eta$, and we have
$$\tilde{\lambda}(b-Ax)-\bar{\lambda}^\top (b-Ax)\geq-\eta\|\tilde{\lambda}-\bar{\lambda}\|_2\psi(b-Ax).$$

By setting $\rho^*(\tilde{\lambda})=\lceil\rho^*(\bar{\lambda})+\eta\|\tilde{\lambda}-\bar{\lambda}\|_2\rceil$, which has $\F,\tilde{\lambda}$-small complexity, we have
\[
\begin{aligned}
z^\LRp_{\rho^*(\tilde{\lambda})}(\tilde{\lambda})=&\inf_{x\in X}\{c^\top x+\frac{1}{2}x^\top Q x+\lambda^\top(b-Ax)+\rho^*(\tilde{\lambda})\psi(b-Ax)\}\\
\geq &\inf_{x\in X}\{c^\top x+\frac{1}{2}x^\top Q x+\lambda^\top(b-Ax)+\rho^*(\bar{\lambda})\psi(b-Ax)\}\\
=&z^\LRp_{\rho^*(\bar{\lambda})}(\bar{\lambda})=z^\IP.
\end{aligned}
\]

Therefore, along with $z_\rho^{\LRp}(\lambda)\leq z^\IP$, we have $z^\LRp_{\rho^*(\tilde{\lambda})}(\tilde{\lambda})=z^\IP$.
\end{proof}
\begin{remark}
The results also apply to MILP, which yield that exact penalty weight $\rho^*$ (which is detailedly discussed in \cite{feizollahi2017exact}) also has $\F$-small complexity.
\end{remark}
\section{Conclusions}
\label{sec:conclusions}
In this paper, we investigate ALD for MIQP. We prove that an asymptotic zero duality gap is reachable as the penalty weight goes to infinity, under some mild conditions (\cref{assum:penalty}) on the penalty function. We also show that a finite penalty weight is enough for an exact penalty when we use any norm as the penalty function. Moreover, we prove that a penalty weight of polynomial size is enough to give an exact penalty representative.

By dualizing and penalizing the difficult constraints using ALD, we can convert the problem to one with an easy feasible region, while maintains the optimality of the original optimal points. However, by introducing a penalty term in the objective function, it might become more difficult to deal with. In addition, as ALD does not deal with integer constraints, the problem is still far from convex.

A special case where the easy constraints are separable, leads us to consider the alternating direction method of multipliers (ADMM) \cite{boyd2011distributed} and relative update schemes, which are proposed to solve convex problems separably. However, for mixed integer problems, such methods are mainly heuristic, like \cite{takapoui2017simple} for MIQP based on ADMM. Future development of separable exact algorithms utilizing the strong duality results and solving general non-convex problems is a potential direction of research.

\clearpage
\bibliographystyle{siamplain}
\bibliography{bib.bib}
\end{document}